\documentclass[11pt]{amsart}

\usepackage{amsthm,amsmath,amsfonts,amssymb,amscd,mathrsfs, diagrams}
\usepackage{txfonts}%,pxfonts}
\usepackage{supertabular,soul}
\usepackage{tikz, graphicx,color}
 \usepackage{multirow}
\usepackage[pdftex,
            pdfauthor={Francis},
            pdftitle={A Survey of FJRW Theory},
            pdfsubject={FJRW theory, Mirror symmetry},
            pdfkeywords={FJRW, Mirror symmetry}]{hyperref}

\usepackage{bbm} %\mathbb numbers and other symbols
\usepackage{hyperref}
\usepackage{yfonts}
\usepackage{eucal}

\newcommand{\comment}[1]{}

\newcommand{\g}{\mathbf{g}}
\newcommand{\h}{\mathbf{h}}
\newcommand{\rhot}{\rho_{\operatorname{tree}}}
\newcommand{\rhol}{\rho_{\operatorname{loop}}}

\newcommand{\bgamma}{\boldsymbol{\gamma}}
\newcommand{\Lcal}{\sL}
\newcommand{\Pcal}{\sP}
\newcommand{\Ccal}{\sC}
  % the canonical bundle on C
\newcommand{\kk}{\omega} % the canonical bundle
\newcommand{\pkk}{\mathring{\omega}}  %principal bundle assoc to omega
    %the log canonical bundle on C'

  %principal bundle assoc to log canonical

\newcommand{\klogc}{\kk_{\log,\Ccal}}    %the log canonical bundle on C
    %principle bundle assoc to log canonical bundle on C'
    %principle bundle assoc to log canonical bundle on C tilde
 %principal log canonical on Ccal_\eta
\newcommand{\pklogc}{\pkk_{\log,\Ccal}}    %principle bundle assoc to log canonical bundle on C
\newcommand{\spn}{\varkappa}  %isom of \zeta_*\Pcal to \pklogc

\newcommand{\chiR}{\zeta}
\newcommand{\genj}{\langle J \rangle}
\newcommand{\defital}{\textit}

\newcommand{\Z}{\mathbb Z}

\newcommand{\C}{\mathbb C}
\newcommand{\CC}{\mathbb C}

\newcommand{\Q}{\mathbb Q}
\newcommand{\fjrw}[2]{ \left\lceil #1 \:; #2 \right\rfloor }

\newcommand{\zero}{\mathbf 0}

\newcommand{\Gmax}{G_{\max}}

\newcommand{\Hess}{\operatorname{Hess}}

\newcommand{\one}{\mb{1}}

\newcommand{\B}{B}

\newcommand{\age}{\operatorname{age}}

\newcommand{\Span}{\text{Span}}

\newcommand{\mb}{\mathbf}

\newcommand{\bx}{\mathbf{x}}

\DeclareMathOperator{\aut}{Aut}

%Scripty Things
\newcommand{\sH}{\mathscr{H}}

\newcommand{\sC}{\mathscr{C}}

\newcommand{\sW}{\mathscr{W}}
\newcommand{\sQ}{\mathscr{Q}}
\newcommand{\sP}{\mathscr{P}}
\newcommand{\mgkbar}{\bMgn{g}{k}}

\newcommand{\bMgn}[2]{\overline{\mathscr{M}}_{#1,#2}}

\newcommand{\sL}{\mathscr{L}}
\renewcommand{\Re}{\mathfrak{Re}}

%%the inclusion of \IC into \cC

%%%%%%%%%%%%%%%%%%%%%%%%%%%%%%%%%%%%%%%%%%%%%%%%%

\newtheorem{thm}{Theorem}[section]

\newtheorem{prop}[thm]{Proposition}

\newtheorem{conj}[thm]{Conjecture}

\theoremstyle{definition}
\newtheorem{remark}[thm]{Remark}

\newtheorem{defn}[thm]{Definition}

\newtheorem{example}[thm]{Example}

\theoremstyle{remark}

\DeclareMathOperator{\fix}{Fix}
\providecommand*{\propertyautorefname}{Property}

\setlength{\marginparwidth}{0.8in}
\let\oldmarginpar\marginpar
\renewcommand\marginpar[1]{\oldmarginpar[\raggedleft\footnotesize #1]%
{\raggedright\footnotesize #1}}

\begin{document}

\date{\today}

\title[Survey of  FJRW Theory]{A Brief Survey of FJRW Theory}
\author{Amanda E. Francis}
\address{Department of Mathematics, Brigham Young University, Provo, UT 84602, USA}
\email{amanda@mathematics.byu.edu}
\author{Tyler J. Jarvis}
\address{Department of Mathematics, Brigham Young University, Provo, UT 84602, USA}
\email{jarvis@mathematics.byu.edu}

%\subjclass[2010]{Primary: 14N35, 53D45, Secondary: 32S05, 37K10, 37K20, 35Q53}

\begin{abstract}
In this paper we describe some of the constructions of FJRW theory.  We also briefly describe its relation to Saito-Givental theory via Landau-Ginzburg mirror symmetry and its relation to Gromov-Witten theory via the Landau-Ginzburg/Calabi-Yau correspondence.  We conclude with a discussion of some of the recent results in the field.
\end{abstract}

\maketitle

%\setcounter{tocdepth}{1}
%\tableofcontents

\section{Introduction}

In this article we briefly describe what is now called \emph{Fan-Jarvis-Ruan-Witten (FJRW) theory}.  This is analogous to Gromov-Witten (GW) theory in many ways.  
It associates a cohomological field theory (and hence also Frobenius manifold) to each pair $W,G$ of a nondegenerate quasihomogeneous polynomial $W$ and an Abelian group $G$ of symmetries of $W$.  

Some of the many similarities between FJRW theory and GW theory include the fact that the state space of each is the cohomology of a ``target space'' (see Section \ref{sec_statespace}), both are equipped with stabilization and evaluation maps, and both have virtual cycles that satisfy certain properties (see Section \ref{sec_virtcycle}) which facilitate computation and that, when pushed down to $\bMgn{g}{k}$, give a cohomological field theory satisfying  the WDVV equation, the string equation, the dilaton equation, and topological recursion relations.  These similarities are not just superficial, as the two theories are highly related via the LG/CY correspondence.

FJRW theory also plays a role in mirror symmetry.  In particular, it provides a Landau-Ginzburg A model for the polynomial $W$, just as Gromov-Witten theory provides a Calabi-Yau A model.  We give a brief summary of mirror symmetry and the LG/CY correspondence here.  We conclude with a brief discussion of recent progress on a mathematical approach to the Gauged Linear Sigma Model, which generalizes FJRW theory and the LG/CY correspondence to complete intersections, toric varieties, and more general spaces.

\subsection{Landau-Ginzburg Mirror Symmetry}

To any isolated singularity, we may construct the  local algebra (or Milnor ring) of $W$, which is a Frobenius algebra, and for each choice of primitive form, K.~Saito constructed a Frobenius manifold deforming the Milnor ring \cite{Sai1,Sai2,Sai3,SaiTak}, 
which may be thought of as the 
genus-zero Landau-Ginzburg B model corresponding to $W.$  Since the Saito
construction is semisimple, Givental's 
theory of ``higher genus Frobenius manifolds'' \cite{Giv:quantization, givental} determines a potential function for the theory.

The \emph{Landau-Ginzburg (LG) Mirror Symmetry Conjecture} predicts
that for a large class of polynomials $W$ (called invertible) with a group $G$ of admissible symmetries
of $W$, there is  a dual polynomial ${W^T}$ and dual group ${G^T}$ of
symmetries of ${W^T}$ such that FJRW Landau-Ginzburg A model for the pair $(W,G)$
is isomorphic to the
Saito-Givental B model construction for the pair $(W^T,G^T)$.  There are also some other formulations of Landau-Ginzburg mirror symmetry which will discuss in Section~\ref{sec:mirror}, along with a brief survey of recent results.

\subsection{Landau-Ginzburg/Calabi-Yau Correspondence}

The inputs to FJRW theory are a quasihomogeneous polynomial $W$ with an isolated singularity at
the origin and an Abelian group $G$ of symmetries of $W$.  But such a $W$ also defines a smooth hypersurface $X_W = \{W=0\}$ in weighted
projective space.  And if we let
$J = G \cap \CC^*_R$, where $\CC^*_R$ is the 1-parameter  group
defining weighted projective space as $\mathbb{P}(c_1,\dots,c_N) = [\C^N/\C^*_R]$, then there is an induced
action of the group $\tilde{G} = G/\genj$ on $X_W$.  

In the case that the quotient
orbifold $[X_W/{\tilde{G}}]$ is Calabi-Yau, the
\emph{Calabi-Yau/Landau-Ginzburg (CY/LG) Correspondence} predicts that
the analytic continuation of the FJRW potential of the pair
$(W, G)$, after a suitable  
symplectic transformation, will match
precisely with the orbifold Gromov-Witten potential of the
orbifold $[X_W/\tilde{G}]$ in all genera.
We will discuss the progress that has been made on this conjecture (and some generalizations) in Section~\ref{sec:LG-CY}.

Combining  mirror symmetry for both Calabi-Yau and Landau-Ginzburg theories with the CY/LG correspondence, we have the following (partly conjectural, partly proven)  picture:

\newarrow{Corresp} <--->
%\newarrow{Isom}{}==={}
\begin{equation*}\label{diag:CYLG}
\begin{diagram}
\text{\fbox{\parbox{14em}{Calabi-Yau A model  of $\left[\!X_W/\tilde{G}\!\right]$\\ (Orbifold Gromov-Witten theory)}}} 		&      \rCorresp^{\text{CY Mirror}}_{\text{Symmetry}}		& \text{\fbox{\parbox{15em}{Calabi-Yau B model  of $\left[\!X_{W^T}/\tilde{G}^T\!\right]$ \\ (Large complex-structure limit)}}}\\
\dCorresp^{\text{CY-LG}}_{\text{Correspondence}}				&
& \dCorresp^{\text{{Givental Symplectic}}}_{\text{{Transformation}}}\\
\text{\fbox{\parbox{15em}{Landau-Ginzburg A model  of $\left[W/G\right]$\\ (FJRW theory)}}} & 	\rCorresp^{\text{LG Mirror}}_{\text{Symmetry}}		&\text{\fbox{\parbox{16em}{Landau-Ginzburg B model of $\left[\!W^T/G^T\!\right]$ \\  (Gepner point, Saito-Givental theory) }}}
\end{diagram}
\end{equation*}

\subsection{Relations on the Stack of Stable Curves}

In addition to fitting nicely into the mirror-symmetry picture above, FJRW theory can also be used to identify relations in the cohomology of the the stack of stable curves.  

The theory of $3$-spin curves corresponds to FJRW theory for the  polynomial $W = x^3$ with the group $G = \mu_3$.  Pandharipande-Pixton-Zvonkine \cite{PPZ} used the grading on the cohomological field theory (See Section~\ref{sec:cohft} for a definition) of $3$-spin curves to identify new cohomological relations on the stack of stable curves.  Higher spin curves give additional relations \cite{PPZ2}, and Janda \cite{Janda} has shown those follow from similar relations arising from the Gromov-Witten cohomological field theory for projective space. 

It is natural to conjecture that applying the Pandharipande-Pixton-Zvonkine methods to the FJRW theory of more general polynomials and groups would give additional relations, although to our knowledge, this has not yet been explored.

\section{Cohomological Field Theories}\label{sec:cohft}

Given the data of a quasihomogeneous polynomial and a finite group of diagonal symmetries, FJRW theory produces a cohomological field theory, in the sense of Kontsevich and Manin \cite{KontMan}.
We begin with a brief review of cohomological field theories.  These are essentially collections of cohomology classes that behave well with respect to the gluing maps of pointed stable curves.

\begin{defn}
For any nonnegative integers $g,k$ with $2g-2+k >0$ let $\bMgn{g}{k}$ denote the stack of stable $k$-pointed curves of genus $g$.  Given nonnegative integers $g_1, g_2, k_1,k_2$ with $2g_i-1+k_i>0$ for $i\in \{1,2\}$, we define a gluing map 
\[
\rhot:\bMgn{g_1}{k_1+1}\times \bMgn{g_2}{k_2+1} \to \bMgn{g_1+g_2}{k_1+k_2}
\]
by attaching the two additional marked points to form a node, as in Figure~\ref{fig:gluetree}.
Similarly, for any nonnegative integers $g,k$ with $2g+k>0$ we define a gluing map 
\[
\rhol:\bMgn{g}{k+2} \to \bMgn{g+1}{k},
\]
again by attaching the two additional marked points to form a node, as in Figure~\ref{fig:glueloop}.

Finally, if $2g-2+k>0$ we define a \emph{forgetting tails} map 
\[
\tau: \bMgn{g}{k+1} \to \bMgn{g}{k}
\]
by forgetting the last marked point and successively contracting any resulting unstable components.
\end{defn}

\begin{figure}[htb]
%\begin{center}
\begin{tikzpicture}
\draw[domain = -.4:.5] plot (\x-4,-.5+1.8*\x*sqrt(\x+1);
\draw[domain = -.4:.5] plot (\x-6,-1.8*\x*sqrt(\x+1);
%\node at (-5,0) {$\times$};
\node at (-4,-.5) {$\bullet$};
\node at (-6,0) {$\bullet$};

\draw[->] (-2.5,0) -- (-.5,0);
\node at (-1.5,.4) {$\rhot$};

\draw[domain = -.4:.5] plot (\x+1,1.8*\x*sqrt(\x+1);
\draw[domain = -.4:.5] plot (\x+1,-1.8*\x*sqrt(\x+1);
\node at (1,0) {$\bullet$};
\end{tikzpicture}
%\end{center}
\caption{The morphism $\rhot$ glues a marked point on each of two stable curves to form a new, nodal curve.\label{fig:gluetree} }
\end{figure}
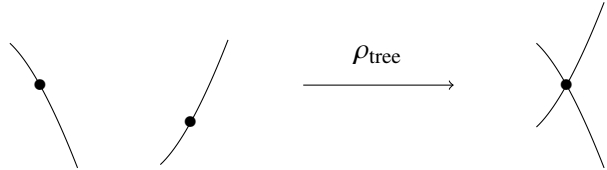

%\begin{center}
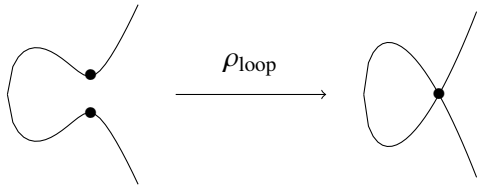
\begin{figure}[htb]

\begin{tikzpicture}
\draw[domain = -1.475:2] plot (.5*\x-4,-.03+.5*sqrt(\x^3-1.5*\x+1);
\draw[domain = -1.475:2] plot (.5*\x-4,.03-.5*sqrt(\x^3-1.5*\x+1);
\node at (-3.63,-.25) {$\bullet$};
\node at (-3.63,.25) {$\bullet$};

\draw[->] (-2.5,0) -- (-.5,0);
\node at (-1.5,.4) {$\rhol$};

\draw[domain = -1:.5] plot (\x+1,1.8*\x*sqrt(\x+1);
\draw[domain = -1:.5] plot (\x+1,-1.8*\x*sqrt(\x+1);
\node at (1,0) {$\bullet$};
\end{tikzpicture}
%\end{center}
\caption{The morphism $\rhol$ glues a two marked points on one stable curve to form a new, nodal curve.\label{fig:glueloop}} 
\end{figure}

%\begin{figure}
%%\begin{center}
%\begin{tikzpicture}
%\draw[domain = -1.475:2] plot (.5*\x+1,.5*sqrt(\x^3-1.5*\x+1);
%\draw[->] (-2.5,0) -- (-.5,0);
%\node at (-1.5,.4) {$\tau$};
%\draw[domain = -1.475:2] plot (.5*\x-4,.5*sqrt(\x^3-1.5*\x+1);
%\node at (-4,.5) {$\bullet$};
%\end{tikzpicture}
%%\end{center}
%\end{figure}
%  

\begin{defn}
A \emph{cohomological field theory with flat identity} (abbreviated CohFT) consists of the data of
\begin{enumerate}
\item A vector space $\sH$ (called the \defital{state space}). 
\item A nondegenerate pairing: 
\[
\langle \ ,\ \rangle: \sH \otimes \sH  \rightarrow \C,
\]
\item An element $\one\in \sH$.

\item For each $g$, $k$ a $k$-linear form $\Lambda_{g,k}^{W,G} : \sH^k \rightarrow H^*(\bMgn{g}{k})$, satisfying the following axioms
	\begin{enumerate}
	\item  Each form
$\Lambda_{g,k}$ is equivariant under the action of the symmetric
  group $S_k$.

	\item For any $\alpha_1,\dots, \alpha_{k_1+k+2} \in \sH$ we have 
\[
\rhot^* \Lambda_{g_1+g_2,k_1+k_2}^{W,G}(\alpha_1, \ldots, \alpha_{k_1+k_2}) \\
= \sum_{\sigma, \tau} 
 \Lambda_{g_1,k_1+1}^{W,G}(\alpha_1, \ldots, \alpha_{k_1}, \sigma)\eta^{\sigma \tau}  \Lambda_{g_2,k_2+1}^{W,G}(\tau,\alpha_{k_1+1}, \ldots, \alpha_{k_1+k_2}),\]
 where $\sigma,\tau$ run over a basis of $\sH$ and $\eta^{\sigma,\tau}$ is the inverse of the matrix expressing the pairing in terms of this basis.
	\item For any $\alpha_1,\dots, \alpha_{k} \in \sH$ we have 
\[
\rhol^* \Lambda_{g+1,k}^{W,G}(\alpha_1, \ldots, \alpha_{k}) = 
 \sum_{\sigma, \tau} 
 \Lambda_{g,k+2}^{W,G}(\alpha_1, \ldots, \alpha_{k}, \sigma,\tau)\eta^{\sigma \tau} 
 \]
 where again $\sigma,\tau$ run over a basis of $\sH$ and $\eta^{\sigma,\tau}$ is the inverse of the matrix expressing the pairing in terms of this basis.
 \item For any $\alpha_1,\dots,\alpha_k\in \sH$ we have 
\[
\tau^* \Lambda_{g,k}^{W,G}(\alpha_1, \ldots, \alpha_{k}) =
 \Lambda_{g,k+1}^{W,G}(\alpha_1, \ldots, \alpha_{k}, \one)
 \]
  \item 
$\int_{\bMgn{0}{3}} \Lambda_{0,3}^{W,G}(\alpha_1, \alpha_2, \one) =
 \langle \alpha_1,\alpha_2 \rangle$
\end{enumerate}
\end{enumerate}
\end{defn}
Some of the most important examples of CohFTs are the Gromov-Witten theory of a variety $X$ and the FJRW theory of a quasihomogeneous polynomial. 

%\hl{Maybe this next stuff is unnecessary?}
%
%For any CohFT, and any $k$-tuple $(\alpha_1,\dots, \alpha_k)$ of elements of $\sH$, integrating the corresponding cohomology class $\int_{\bMgn{g}{k}} \Lambda_{g,k}^{W,G}(\alpha_1,\dots,\alpha_k)$ gives a correlator $\langle \alpha_1,\dots, \alpha_k\rangle_g$, and these can be assembled into a potential function in the usual way.
%
%The genus-zero three-point correlators of a CohFT define an associative product $\star$ on the the state space $\sH$, and the product satisfies the Frobenius property $\langle a \star b, c \rangle = \langle a ,b\star  c \rangle$ for all $a,b,c\in \sH$.  That is, $\sH,\star$ is a Frobenius algebra.  In the the case of Gromov-Witten theory, this Frobenius algebra called \emph{quantum cohomology}.   The potential function of all the the genus-zero correlators defines a Frobenius manifold (a family of Frobenius algebras).
%

\section{FJRW Theory}

 \subsection{Inputs}
The ingredients for FJRW theory are an admissible quasihomogeneous polynomial and an admissible group of diagonal symmetries.  Let us now define what we mean by these.

\begin{defn}
A polynomial $W \in \mathbb{C}[x_1, \ldots, x_n]$, where $W = \sum_i b_i\prod_{j = 1}^n x_j^{a_{i,j}}$ 
is called \defital{quasihomogeneous} if there exist positive rational numbers $q_j$ (called weights) for each variable $x_j$ such that each monomial of $W$ has weighted degree one.  That is, for every $i$ where $b_i \neq 0$, 
\[
\sum_j q_j a_{i,j} = 1.
\]

A polynomial $W \in \mathbb{C}[x_1, \ldots, x_n]$ is called \defital{nondegenerate} if it has an isolated singularity at the origin, otherwise it is called \defital{degenerate}.

Any polynomial which is both quasihomogeneous and nondegenerate will be considered \defital{admissible} if there are no monomials of $W$ of the form $x_ix_j$.  This is equivalent to saying that the weights $q_1,\dots, q_n$ are uniquely determined and all weights lie in the interval $(0,1/2)\cap\Q$.

We say that an admissible polynomial is \defital{invertible} if it has the same number of variables and monomials. 
\end{defn}
\begin{remark}\label{rem:integerweights}
It is often useful to use integer weights $c_1,\dots, c_N,d$ for $W$ such that $\gcd(c_1,\dots, c_N,d) =1$ and each $q_i = c_i/d$.
\end{remark}
\begin{example}
One may easily check that $x^3+y^3$ is admissible and invertible, $x^3 + y^3 + xy^2$ is admissible, $x^3 + y^3 +xy^4$ is not quasihomogeneous and $x^4 + x^2y^2$ is quasihomogeneous but degenerate.
\end{example}

\begin{remark}\label{rem:classification}
Classical singularity theory studies singularities up to so-called \defital{right equivalence}; that is, up to a smooth change of variables. The quantum singularity theory of FJRW theory appears to be more rigid, by comparison. The only changes of variables allowed in this theory are rescaling and reordering (relabeling)  variables of the same weight. Therefore, we often need classification results that are stronger than those of classical singularity theory.  On the other hand, the quantum invariants of FJRW theory are known to be independent of the polynomial $W$  and depend only on the weights and an \defital{admissible group} $G$, as we will see in Theorem \ref{thm_WG}.
\end{remark}
 
%%Maybe we want to cut this invertible stuff?  
%%TJ It seems ok to me.
A fundamental tool for studying Landau-Ginzburg mirror symmetry is the following classification of invertible singularities. 
\begin{prop} \cite{KrSk} \label{atomictypes}
A polynomial is (admissible and) invertible if, and only if, it can be written (up to relabeling, rearrangement, and rescaling of the variables) as a disjoint sum of invertible polynomials of one or more of the following three \defital{atomic types}:
\begin{align*}
  \text{Fermat {type:} } & x^a,\notag\\
  \text{loop   {type:} } & x_1^{a_1}x_2 + x_2^{a_2}x_3 + \dots + x_N^{a_N}x_1,\\
   \text{chain  {type:} } & x_1^{a_1}x_2 + x_2^{a_2}x_3 + \dots + x_N^{a_N},\notag
\end{align*}
where $a_k\geq 2$ for all $k\in \{1,\dots,N\}$.
\end{prop}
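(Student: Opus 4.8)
The plan is to work entirely with the exponent matrix $A = (a_{i,j})$ of the invertible polynomial $W = \sum_{i=1}^n b_i \prod_{j=1}^n x_j^{a_{i,j}}$. Since $W$ is invertible, $A$ is a square $n \times n$ matrix with nonnegative integer entries, and since the weights $q_1,\dots,q_n$ are uniquely determined (admissibility), the system $A\,\mathbf{q} = \mathbf{1}$ has a unique solution with all $q_j \in (0,1/2)\cap\Q$; in particular $A$ is invertible over $\Q$. The atomic decomposition will correspond to a block decomposition of $A$ after permuting rows and columns. The key combinatorial object is the bipartite-style incidence structure: think of the variables $x_1,\dots,x_n$ as ``column vertices'' and the monomials as ``row vertices'', with an edge whenever $a_{i,j}\neq 0$. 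The claim reduces to showing that each connected component of this structure, subject to the constraints below, is exactly one of the three listed atomic graphs.

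The main steps would be as follows. First, I would record the local-structure constraints forced by nondegeneracy (isolated singularity at the origin). The partial derivative $\partial W/\partial x_j$ must vanish at the origin only to finite order in the usual sense; translating this, every variable $x_j$ must appear in $W$, and standard arguments (going back to the analysis of quasihomogeneous isolated singularities) show that each variable appears in essentially one of two ways: either via a ``pointing'' monomial $x_j^{a_j}$ (a pure power) or via a monomial $x_j^{a_j} x_{j'}$ with exactly one other variable to first power. One shows that a variable cannot appear only in monomials where it has degree $\geq 1$ in two or more distinct other variables without violating either invertibility (square matrix with the right count of monomials) or nondegeneracy. Second, I would build an auxiliary directed graph on the variable set: draw an arrow $j \to j'$ whenever the monomial associated with ``taking care of'' $x_j$ is $x_j^{a_j}x_{j'}$, and mark $j$ as a sink-type (Fermat) vertex when that monomial is the pure power $x_j^{a_j}$. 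The admissibility hypothesis ($a_k \geq 2$, no $x_ix_j$ terms) guarantees every vertex has out-degree exactly one or is a Fermat vertex, and invertibility (counting: $n$ monomials for $n$ variables) forces the in-degree bookkeeping to match up. Third, a graph-theoretic case analysis on the connected components of this functional digraph: a component with no Fermat vertex must, since every vertex has out-degree one and the component is finite, contain a cycle, and minimality/invertibility forces the whole component to \emph{be} that cycle — this is precisely the loop type $x_1^{a_1}x_2 + \cdots + x_N^{a_N}x_1$. A component containing a Fermat vertex $x_N^{a_N}$ has, by the out-degree-one property and acyclicity toward the sink, the structure of a path ending at $x_N$ — this is the chain type. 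A singleton Fermat vertex is $x^a$. Fourth, one checks conversely that each atomic type is genuinely admissible and invertible (a short direct computation of weights and of the Jacobian ideal for each of the three families), and that a disjoint sum of invertibles is invertible, so the decomposition is into admissible invertible summands.

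The hard part will be the second step: pinning down that each variable is ``assigned'' to a unique monomial of one of the two permitted shapes, and that this assignment is consistent (i.e. that after permutation $A$ really does become block-diagonal with Fermat/loop/chain blocks). Naively, a variable could appear in several monomials, and a monomial could involve several variables to positive power, so the functional-digraph picture is not given for free — it has to be \emph{extracted} from the interplay of three constraints simultaneously: squareness of $A$ (invertibility of the polynomial), invertibility of $A$ over $\Q$ with the sharp weight bounds (admissibility, weights in $(0,1/2)$), and the isolated-singularity condition (nondegeneracy), which is the subtlest to use because it is a statement about the whole Jacobian ideal rather than about individual monomials. The cleanest route I know is to argue by induction on $n$: locate a variable that appears in a monomial of minimal ``complexity'', peel off the atomic block it generates, verify the complementary block is again a square invertible exponent matrix satisfying the same hypotheses, and recurse. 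I would expect to cite Kreuzer–Skarke \cite{KrSk} for the original argument rather than reproduce the full digraph case analysis, since in a survey the statement is used as a black box; but the proposal above is the shape a self-contained proof would take.
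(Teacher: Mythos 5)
The paper does not prove this proposition at all: it is stated as a classification theorem of Kreuzer and Skarke and cited to \cite{KrSk}, so there is no internal proof to compare against. Your outline does follow the general shape of the argument in the literature: the classical lemma that nondegeneracy forces, for each variable $x_j$, the existence of a monomial $x_j^{a}$ or $x_j^{a}x_{j'}$ in $W$; the counting argument that with exactly $n$ monomials for $n$ variables each monomial is the ``pointer'' of exactly one variable (here the exclusion of $x_ix_j$ terms matters, since such a monomial would point for two variables at once); and the functional digraph whose components are the atomic types.

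The genuine gap is the one you flag yourself but then dispose of too quickly: you assert that ``invertibility (counting: $n$ monomials for $n$ variables) forces the in-degree bookkeeping to match up.'' It does not. A vertex of in-degree two is perfectly consistent with out-degree at most one everywhere, with squareness of $A$, with $\det A\neq 0$, and even with admissible weights: for example $W=x_1^2x_2+x_2^2x_1+x_3^2x_1$ has digraph $1\to 2$, $2\to 1$, $3\to 1$, an invertible exponent matrix, and weights $q_1=q_2=q_3=\tfrac13$, yet it is degenerate (on $x_1=0$ the Jacobian ideal vanishes along the curve $x_2^2+x_3^2=0$). So ruling out branching --- and hence ruling out trees hanging off a cycle or off a Fermat sink --- is not bookkeeping; it is a second, separate application of the nondegeneracy hypothesis, in which one must exhibit a positive-dimensional singular locus whenever two pointers feed into the same variable. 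This is precisely the content of the Kreuzer--Skarke argument that your step three takes for granted when it says ``minimality/invertibility forces the whole component to be that cycle'' and ``acyclicity toward the sink'' gives a path. Until that lemma is stated and proved, the case analysis does not close. The remaining steps (the converse check that each atomic type is nondegenerate with weights in $(0,1/2)$, and that disjoint sums of invertibles are invertible) are routine as you say, and deferring the whole thing to \cite{KrSk} is of course what the survey itself does.
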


The other main ingredient of FJRW theory is the symmetry group.   Indeed, the theory depends heavily on the choice of symmetry group.   In this sense, it may be thought of as an orbifold singularity or the orbifold Landau-Ginzburg theory of $[W/G]$.

\begin{defn}\label{G}
Let $W$ be an admissible polynomial. The \defital{maximal diagonal symmetry group} $\Gmax$ is the subgroup of $(\CC^*)^N$ of elements that fix $W$:
\[
\Gmax = \{\ (\beta_1,\dots, \beta_N) \mid  W(\beta_1 x_1, \dots, \beta_Nx_N) = W(x_1,\dots, x_N)\ \}.
\]
It is easy to see that every element $\gamma \in \Gmax$ can be written in the form $\gamma = (e^{2\pi i \theta_1}, \ldots, e^{2\pi i \theta_N})$ with each $\theta_i \in [0,1)$.  We call the rational numbers $\theta_1, \dots, \theta_N$ the \emph{phases} of $\gamma$.  The sum of the phases is the \emph{age} of $\gamma$:
\[
\age(\gamma) = \sum_{i=1}^N \theta_i.
\]
\end{defn}

\begin{defn}\label{J_elem}
If $q_1, q_2, \ldots, q_N$ are the weights of $W$, then the element $J = (e^{2\pi i q_1}, \ldots, e^{2\pi i q_N})$ is an element of $\Gmax$, called the \defital{grading element}.  

Any subgroup $G$ of $\Gmax$ which contains the element $J$ is called \defital{admissible}.  
\end{defn}

\begin{defn}
%We use the notation $I_g = \{i \, \mid \,  g \cdot x_i = x_i\}$ to denote the set of indices of those variables fixed by an element $g$.  
Given an admissible group $G$ and an element $g\in G$, we let $\fix(g)$ denote the subspace of $\C^n$ which is fixed by $g$
\[
\fix(g) =\{\ (a_1, \ldots, a_N) \mid g(a_1,\dots,a_N) = (a_1,\dots, a_N) \},
\]
we denote the fixed indices by $I_g = \{ i \mid \theta^g_i =0\}$, and 
we write  
 $N_g = \# I_g = \dim(\fix(g))$.

Finally, we write 
\[
W_g = W|_{\fix(g)}
\] 
to  denote the polynomial $W$ restricted to $\fix(g)$.  
\end{defn}

Notice that 
\[
\age(g)+\age(g^{-1}) =\sum_{i: \theta_i^g \neq 0}1 = N - N_g
\]
 
 \begin{example}\label{example1}
Consider $W = x^3 + y^3+yz^2$.  This is a sum of a Fermat and a (reverse) chain polynomial.  In this case $q_x = q_y =q_z= \frac{1}{3}$, so  $J = (e^{2\pi i /3},e^{2\pi i /3}, e^{2\pi i /3})$ has order 3.  However, $\Gmax = \langle (e^{2\pi i /3},1,1), (1,e^{2\pi i /3}, e^{2\pi i \cdot 5/6}) \rangle$, which is a group of order 18.  It is easy to check that there are also admissible subgroups of order 6 and 9.  
 \end{example}
 
Each admissible symmetry group could potentially give a distinct FJRW theory, although there are cases where different groups do give the same theory.

\subsection{State Space}\label{sec_statespace}
In this section we define the state space of the theory for a given admissible polynomial $W\in \CC[x_1,\dots, x_N]$ and an admissible group $G$ of symmetries of $W$.

\begin{defn}
Because $W$ is $G$-invariant, it defines a function $W: \left[\C^N/G\right] \rightarrow \C$, where $[\C^N/G]$ denotes the stack quotient of $\C^N$ by the action of $G$.
We define 
\[
W^\infty = \left(\Re(W)\right)^{-1}(M,\infty)\text{ for }M >>0.
\]

Similarly, for $g \in G$, we have $W_g: \left[ \fix(g)/G\right] \rightarrow \C,$ and we define
\[
W_g^\infty = \left(\Re(W_g)\right)^{-1}(M,\infty).
\]
\end{defn}
\begin{defn}
The FJRW state space corresponding to $W$ and $G$ is given by the relative Chen-Ruan cohomology:
\[
\sH_{W,G} = H^{\bullet + 2q}_{CR}\left(\left[\C^N/G\right],W^\infty\right)
=\bigoplus_g H^{\bullet+2q-2\age(g)} \left(\left[ \fix(g)/G\right],W_g^\infty\right)
=\bigoplus_g \sH_g, 
\]
where for each $g\in G$, the subspace $\sH_g$ is called the \emph{$g$-sector} of the state space.  In particular, the degree of each element is shifted by $2\age(g)-2q$, so if $\varkappa \in H^{m} \left(\left[ \fix(g)/G\right],W_g^\infty,\C\right)$, then the degree of $\varkappa$ in $\sH_{W,G}$ is $m-2q+2\age(g)$. 
\end{defn}

\begin{remark}\label{def:narrow}\hfill
\begin{enumerate}
\item Since $G$ is Abelian, we have $H^n([\fix{g}/G],W^\infty,\C) = H^n(\fix(g),W^\infty,\C)^G$.
\item We have  $H^n(\fix(g),W^\infty,\C) = 0$ except when $n=N_g= \dim(\fix(g))$, therefore every element of $\sH_g$ has degree $N_g - 2q + 2\age(g)$.
\item If $\fix(g) = \{\zero\}$, then $W_g = 0$ and 
\[\sH_g = H^{0} \left(\left[ \zero/G\right],\emptyset,\C \right)  = \C,
\]
and the elements of $\sH_g$ all have degree $2\age(g) -2q$.  We often write $\sH_g = \C_g$ instead of $\C$, to indicate which sector of the state space we are describing.  

The sectors where $\fix(g) = \{\zero\}$ are called $\emph{narrow}$ sectors, and the other sectors are called \emph{broad}.

\item If $g = J$ (the grading element of $W$), then the sector $\sH_J$ is narrow, and $\age(J) = q$, so  
\[
\sH_J = H^{0} \left(\left[ \zero/G\right],\C \right)  = \C
\]
 is supported only in degree zero.  We denote the element $1\in \C_J$  by $\one$.  This will be the flat identity of our theory 
\end{enumerate}
\end{remark}

We will often use the following alternative description of $\sH_{W,G}$ in terms of germs of holomorphic $N$ forms on $\C^N$.   
\begin{thm}[\cite{Wall1,Wall2}] %%TODO check on other refs
Let $\Omega^N$ be the germs of holomorphic $N$-forms on $\C^N$ near zero.  We have
\[
H^N(\C^N,W^\infty,\C) \cong \Omega^N/(dW \wedge\Omega^{N-1})
\]
as $G$-modules.
\end{thm}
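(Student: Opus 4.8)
The plan is to identify both sides with the cohomology of the Milnor fibre of $W$; this is the classical theorem of Brieskorn and Sebastiani, made equivariant, and I would split it into an algebraic step, a topological step, and a comparison. On the algebraic side: since $W$ has an isolated singularity, the partials $\partial_1 W,\dots,\partial_N W$ form a regular sequence, so the Koszul complex on them is exact except in degree zero. Under the identification $\Omega^k\cong\Omega^N\otimes\Lambda^{N-k}(\C^N)$ the operator $dW\wedge(-)\colon\Omega^k\to\Omega^{k+1}$ becomes the Koszul differential, whence $(\Omega^\bullet,dW\wedge(-))$ is exact below degree $N$ and $\Omega^N/(dW\wedge\Omega^{N-1})\cong\bigl(\C[x_1,\dots,x_N]/(\partial_1W,\dots,\partial_N W)\bigr)\,dx_1\wedge\cdots\wedge dx_N$, the Milnor ring times the top form. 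This is $\mu$-dimensional with $\mu=\prod_i(q_i^{-1}-1)$, and as a $G$-module it is the Milnor ring (with its evident $G$-action) tensored with the one-dimensional representation by which $G$ acts on $dx_1\wedge\cdots\wedge dx_N$.

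On the topological side: because $W$ is quasihomogeneous with an isolated singularity, $W\colon\C^N\setminus W^{-1}(0)\to\C^*$ is a locally trivial (Milnor) fibration whose fibre $F$ is homotopy equivalent to a wedge of $\mu$ spheres $S^{N-1}$. For $M\gg0$ the half-plane $\{\Re z>M\}$ lies inside $\C^*$ and is contractible, and $W^\infty=W^{-1}(\{\Re z>M\})$, so $W|_{W^\infty}$ is a bundle over a contractible base and $W^\infty\simeq F$. Since $\C^N$ is contractible, the long exact sequence of the pair $(\C^N,W^\infty)$ gives $H^N(\C^N,W^\infty)\cong\tilde H^{N-1}(F)$, which is $\mu$-dimensional. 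Because $G$ preserves $W$ it preserves $W^\infty$ and carries $F$ to itself, so — averaging the retractions over the finite group if needed — all of these isomorphisms are $G$-equivariant.

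Both sides are now $\mu$-dimensional $G$-modules, so it suffices to build a $G$-equivariant map and check injectivity. The natural one is the Gelfand--Leray map: for $\omega\in\Omega^N$, choose near a smooth fibre an $(N-1)$-form $\psi$ with $dW\wedge\psi=\omega$ (possible off the critical locus) and restrict $\psi$ to $F$; this is well defined modulo the ideal generated by $dW$, which vanishes on $F$, and produces a holomorphic — hence closed, as $\dim_\C F=N-1$ — $(N-1)$-form on $F$, i.e.\ a class in $H^{N-1}(F;\C)\cong H^N(\C^N,W^\infty)$. Equivariance is automatic, since $g^*dW=d(W\circ g)=dW$ for $g\in G$ and $F$ is $G$-invariant; in particular the $G$-module structure on the right is forced, and $G$-invariance of $W$ is not even needed for the underlying ungraded isomorphism.

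The genuinely substantive point is that this map kills $dW\wedge\Omega^{N-1}$ and is bijective — equivalently, that the ``algebraic'' lattice $\Omega^N/(dW\wedge\Omega^{N-1})$ agrees with the ``topological'' fibre cohomology $H^{N-1}(F)$. This is exactly the Brieskorn--Sebastiani analysis of the Gauss--Manin connection, which in the quasihomogeneous case is clean because the Euler vector field splits everything by weight; concretely, one shows the twisted de Rham complex $(\Omega^\bullet,d-dW\wedge(-))$ — which, after conjugation by $e^{-W}$, is the rapid-decay complex computing $H^\bullet(\C^N,W^\infty)$ — has the same cohomology as $(\Omega^\bullet,dW\wedge(-))$, via a spectral sequence filtered by the Euler field. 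An alternative, more hands-on route in the quasihomogeneous case is to exhibit the Pham basis of vanishing cycles, pair it against the monomial forms $x^{\alpha}\,dx_1\wedge\cdots\wedge dx_N$, and verify that the resulting period matrix of $\Gamma$-factors is non-singular. I expect this comparison to be the main obstacle; once it is in place, compatibility with $G$ is free.
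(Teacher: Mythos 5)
The survey does not actually prove this statement --- it is quoted from \cite{Wall1,Wall2} --- so I am assessing your argument on its own terms. Your algebraic step (Koszul complex, regular sequence of partials) and topological step (global Milnor fibration over the contractible half-plane, long exact sequence of the pair, $G$-equivariance) are both fine. The problem is in the comparison: the Gelfand--Leray map you propose does \emph{not} annihilate $dW\wedge\Omega^{N-1}$, so it is not even well defined on the quotient, and this is not a gap you can close later --- it is false. Concretely, take $N=1$, $W=x^a$: the fibre $F_t$ is the set of $a$-th roots of $t$, and for $\eta=x\in\Omega^0$ the form $dW\wedge\eta=ax^a\,dx$ (which is $0$ in $\Omega^1/(dW\wedge\Omega^0)$) has Gelfand--Leray form $x|_{F_t}$, a non-constant function on $F_t$ and hence a nonzero class in $\tilde H^0(F_t)\cong H^1(\C,W^\infty)$. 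This is not an artifact of $N=1$: since $F_t$ is a closed submanifold of a Stein ball, every holomorphic $(N-1)$-form on $F_t$ extends to the ambient ball (Cartan B plus the conormal sequence), so the assignment $dW\wedge\eta\mapsto\eta|_{F_t}$ actually maps $dW\wedge\Omega^{N-1}$ \emph{onto} $H^{N-1}(F_t)$. The Gelfand--Leray map naturally lives on the Brieskorn lattice $\Omega^N/(dW\wedge d\Omega^{N-2})$; the Milnor-algebra quotient $\Omega^N/(dW\wedge\Omega^{N-1})$ is the further quotient by the image of $\partial_t^{-1}$, and identifying \emph{that} with $H^{N-1}(F)$ requires taking leading terms of the asymptotic expansions of periods as $t\to 0$ (equivalently, working with the graded pieces singled out by the $\C^*$-action), not evaluating at a fixed fibre. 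So the ``main obstacle'' you defer to Brieskorn--Sebastiani is needed already to define the map, not merely to prove bijectivity, and as written your proof establishes only that the two sides have the same dimension $\mu$ and the same $G$-action on one of them.

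Two further remarks. First, the equivariant statement the survey needs can be obtained without constructing any period map at all, and this is essentially Wall's route (also gestured at in the remark following Theorem~\ref{thm_WG}): compute the $\operatorname{Rep}(G)$-valued Poincar\'e series of $\Omega^N/(dW\wedge\Omega^{N-1})$ from the Koszul resolution, compute the $G$-equivariant character of $H^{N-1}(F)$ from the finite-order monodromy $J$ and the cyclic-cover structure of the Milnor fibre of a quasihomogeneous singularity, and observe that two $G$-modules with equal characters are isomorphic. Second, if you want to keep the geometric approach, the correct statement to prove is that the Gelfand--Leray classes of the specific graded representatives $x^\alpha\,d\bx$, for $x^\alpha$ a monomial basis of $\sQ_W$, are linearly independent in $H^{N-1}(F_1)$; the eigenvalue of the monodromy on $[x^\alpha\,d\bx/dW]$ is determined by the weight of $x^\alpha\,d\bx$, which separates most of them, and the remaining independence is exactly the nonvanishing of the $\Gamma$-factor period determinant you allude to. Either route must actually be carried out; as it stands the central isomorphism is asserted rather than proved.
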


Notice that
\[
\frac{\Omega^N}{dW \wedge\Omega^{N-1}} %\cong \frac{\{f \cdot dx_1\wedge \ldots\wedge dx_N\}}{\{dW \wedge h \cdot dx_1 \wedge \hat{dx_i} \wedge \ldots \wedge dx_N\}} 
\cong \frac{\C[x_1, \ldots, x_N]}{(\frac{\partial W}{\partial x_1}, \ldots, \frac{\partial W}{\partial x_N})} \cdot dx_1 \wedge \ldots \wedge dx_N,
\]
so if $\sQ_W = {\C[x_1, \ldots, x_N]}/{(\frac{\partial W}{\partial x_1}, \ldots, \frac{\partial W}{\partial x_N})}$ is the local algebra (a.k.a. Milnor ring) of $W$, and if $d\bx = dx_1\wedge\dots\wedge dx_N$ is the obvious volume form, then   
\[
\sH_1 = H^N(\C^N,W^\infty,\C)^G \cong \left(\Omega^{N}/(dW \wedge\Omega^{N-1})\right)^G \cong (\sQ_W d\bx)^G.\]
  And, more generally, if the fixed indices of $g$ are $\{i_1, \ldots, i_{N_g}\} = I_g$, we have $\sQ_{W_g} = \C[x_{i_1}, \ldots, x_{i_{N_g}}]/{(\frac{\partial W}{\partial x_{i_1}}, \ldots, \frac{\partial W}{\partial x_{i_{N_g}}})}$ and $d\bx_{\fix(g)} = dx_{i_1}\wedge\dots\wedge dx_{i_{N_g}}$, therefore  
\begin{equation*}%\label{statespacemilnor}
\sH_{W,G} = \bigoplus_g H^{N_g}([\fix(g)/G],W_g^\infty,\C) \cong  \bigoplus_g \left(\Omega^{N_g}/(dW_g \wedge\Omega^{N_g-1})\right)^G \cong \bigoplus_g (\sQ_{W_g} d\bx_{\fix(g)})^G.
\end{equation*}
\begin{defn}\label{def:define-fjrw-notation}
We often write elements of the sector $\sH_g$ as $\fjrw{\phi}{g}$, where $\phi$ is an element of $\sQ_{W_g}d\bx_{\fix(g)}$.
\end{defn}

\begin{remark}
There is a natural Hodge structure on the $J$-invariant part of the relative cohomology.  There is also a natural $\Q$-grading on the local algebra $\sQ_{W_g}$ defined by the weighted degree of monomials in $\sQ_{W_g}$, and this gives a $\Q$-grading on $\sQ_{W_g} d\bx_{\fix(g)}$ defined by giving the volume form $d\, \bx_{\fix(g)}$ degree $\sum_{j=1}^{N_g} q_{i_j}$.  So if $\phi = m\, d\bx_{\fix(g)}$, then $\deg(\phi) = \deg(m) + \sum_{j=1}^{N_g} q_{i_j}$.

Tracking the Hodge structure through Wall's isomorphism connects the Hodge grading on relative cohomology with the grading on  $\left(\sQ_{W_g} d\bx_{\fix(g)}\right)^{\langle J \rangle}$:
\[
H^{N-p,p}(\C^{N_g},W_g^\infty,\C)^{\langle J\rangle}  \cong \left\{ \phi\in \sQ_{W_g} d\bx_{\fix(g)}  \mid \deg(\phi) = p \right\}^{\langle J \rangle}.
\]
We use this to put a bigrading on the state space 
\[
\sH_{W,G}^{a,b} = \bigoplus_g H^{a+q-\age{g},\, b+q-\age{g}}(\C^{N_g},W_g^\infty,\C)^G
\]
so if  $\deg(\phi) = p$ in the usual grading on $\sQ_{W_g}d\bx_{\fix(g)}$, then the bidegree of $\fjrw{\phi}{g}$ is 
\[
(\deg_+(\fjrw{\phi}{g}), \deg_-(\fjrw{\phi}{g})) = (N_g - p - q +\age(g), p - q +\age(g)),
\]   
and we have 
\[
\deg(\fjrw{\phi}{g}) = \deg_+(\fjrw{\phi}{g}) +  \deg_-(\fjrw{\phi}{g}).
\]
Finally, we note that it is often useful to define the complex degree to be 
\[
\deg_\C(\fjrw{\phi}{g}) = \frac12 \deg(\fjrw{\phi}{g}).
\]
\end{remark}

\begin{remark}\label{rem_Milnor}The following are standard facts from classical singularity theory and classical geometry: %TODO  get ref? 
\begin{enumerate}
\item $\sQ_W$ is finite dimensional if and only if $W$ has an isolated singularity at the origin.
\item $\sQ_{f_1} = \sQ_{f_2}$ if and only if $f_1 = f_2 \circ h$ for some analytic isomorphism  $h : \C^N \rightarrow \C^N$.
\item $\dim \sQ_W$ is determined solely by the weights $q_i = \frac{c_i}{d}$.  In fact 
\[
\mu:=\dim \sQ_W = \prod \left( \frac{1}{q_i}-1\right)
\]
\item The monomials of highest degree in $\sQ_W$ have degree $\hat{c}$ where $\hat{c} = N-2q = \sum_{i=1}^N (1-2q_i)$ is called the \emph{central charge} of $W$.  The subspace of $\sQ_W$ of complex degree $\hat{c}$ is one-dimensional and is generated by $\operatorname{Hess}(W) = \det\left[ \frac{\partial^2 W}{\partial x_i \partial x_j}\right]$.
\item The hypersurface $X_W = \left\{W = 0\right\}\subset \C^N$ is Calabi-Yau, if and only if, $\hat{c} = N-2$. 
\end{enumerate}
\end{remark}

\begin{example}\label{examplestatespace}
Consider $W = x^3 + y^3 + y z^2$ as in Example \ref{example1} with the group $G = \langle \gamma \rangle$, where $\gamma = (e^{2\pi i /3},e^{2\pi i /3}, e^{2\pi i \cdot 5/6})$.  Note that in this case we have $J = \gamma^4$.

We can find a basis for the state space $\sH_{W,G} = \bigoplus_g \sH_g^G$, in the following way.

For $g \in  \gamma, \gamma^2, \gamma^4, \gamma^5$ we have $I_g = \emptyset$ and  \[
\sH_g = \Span \{ \fjrw{1}{g}\}\cong \C.\]  
Since the usual grading of $1\in \C$ is $0$, and since $N_g = 0$ and $q=1$, we have 
\[
(\deg_+\fjrw{1}{g},\deg_-\fjrw{1}{g})  =(0 - 0 - 1 + \age(g), 0 - 1 + \age(g))
\]
From this, we easily compute
\begin{align*}
(\deg_+\fjrw{1}{\gamma},\deg_-\fjrw{1}{\gamma}) & = (1/2, 1/2)\\
(\deg_+\fjrw{1}{\gamma^2},\deg_-\fjrw{1}{\gamma^2}) & = (1,1)\\
(\deg_+\fjrw{1}{\gamma^4},\deg_-\fjrw{1}{\gamma^4}) & = (0,0)\\
(\deg_+\fjrw{1}{\gamma^5},\deg_-\fjrw{1}{\gamma^5}) & = (1/2, 1/2)
\end{align*}

For $g = \gamma^3 = (1,1,-1)$ we are interested in those $x^i y^j dx\wedge dy $ in $\sQ_g dx\wedge dy$ that are fixed by $G$, that is, where 
\[
\gamma \cdot x^i y^j dx\wedge dy  = e^{2\pi i \cdot \frac{i+j+1+1}{3}}x^i y^j dx\wedge dy  = x^i y^j dx\wedge dy. 
\]
  So, $\sH_g = \Span\{ \fjrw{x}{g}, \fjrw{y}{g}\}\cong \C^2$.

Since the usual grading of both $x\,dx\wedge dy$ and $y\,dx\wedge dy$ in $\sQ_{W_g}dx\wedge dy$ is $1$, and since $N_g = 2$, since $\age(\gamma^3) = 1/2$, and since $q=1$, we have 
\[
(\deg_+\fjrw{x}{\gamma^3},\deg_-\fjrw{x}{\gamma^3}) = (\deg_+\fjrw{y}{\gamma^3},\deg_-\fjrw{y}{\gamma^3})   = (1/2,1/2).
\]

Finally, for $g =\gamma^0 = (1,1,1)$, it is easy to see that none of the monomials $x^i y^j z^k dx\wedge dy \wedge dz$ in $\sQ_g d\bx$  are fixed by $G$, so  $\sH_g = \emptyset$, and  
\[
\sH_{W,G} = \Span \left\{ 
\fjrw{1}{\gamma}, \fjrw{1}{\gamma^2},\fjrw{x}{\gamma^3}, \fjrw{y}{\gamma^3},\fjrw{1}{\gamma^4}, \fjrw{1}{\gamma^5}
\right\}.
\]
\end{example}

Although our definition of the state space depends \emph{a priori} on the polynomial $W$, as a bigraded vector space the state space really only depends on the weights $q_1,\dots, q_n$ and the group $G$ (thought of as a subgroup of $GL(N)$.)
\begin{thm}\label{thm_WG}
As a bigraded vector space, the state space is determined only by the weights $q_1,\dots q_N$ and by the action of the group $G$ on $\C^N$.
\end{thm}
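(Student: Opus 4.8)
The plan is to reduce the statement to the computation of a $G$-equivariant Poincar\'e series. Unwinding the definitions of Section~\ref{sec_statespace}, we have $\sH_{W,G}=\bigoplus_{g\in G}\bigl(\sQ_{W_g}\,d\bx_{\fix(g)}\bigr)^{G}$, and a class $\fjrw{\phi}{g}$ with $\deg(\phi)=p$ lies in bidegree $(N_g-p-q+\age(g),\ p-q+\age(g))$. A diagonal subgroup $G\subset GL(N)$ is the same thing as the tuple of characters $\rho_i\colon G\to\C^{*}$ giving its action on the coordinates $x_i$, and from this data together with the weights $q_i$ one reads off which $g$ contribute, the fixed index sets $I_g$, the integers $N_g=\#I_g$, the ages $\age(g)$, and $q=\sum_i q_i$. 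Hence $\sH_{W,G}$, as a bigraded vector space, is determined once we know, for each $g\in G$, the function $p\mapsto\dim\bigl\{\psi\in\sQ_{W_g}\,d\bx_{\fix(g)}\ \big|\ \deg\psi=p\bigr\}^{G}$. Since $d\bx_{\fix(g)}$ spans a one-dimensional $G$-representation sitting in a single weighted degree, and both that representation and that degree are determined by the $\rho_i$ and $q_i$ for $i\in I_g$, it suffices to understand the graded $G$-module $\sQ_{W_g}$.

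I package this through a character-valued Poincar\'e series. Using integer weights $c_1,\dots,c_N,d$ with $q_i=c_i/d$ (Remark~\ref{rem:integerweights}), regard $\sQ_{W_g}$ as a $\Z$-graded module over the representation ring $R(G)$ and set $P_g(t)=\sum_{p}[(\sQ_{W_g})_p]\,t^{p}\in R(G)\otimes\Z[[t]]$, where $[(\sQ_{W_g})_p]$ is the class of the degree-$p$ graded piece as a $G$-representation. Extracting the coefficient of the trivial character from $P_g(t)$ returns precisely the generating function of the graded dimensions of $(\sQ_{W_g})^{G}$. So the theorem reduces to showing that $P_g(t)$ depends only on $d$ and on the $c_i,\rho_i$ with $i\in I_g$.

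This is the heart of the matter. One first checks that $W_g=W|_{\fix(g)}$ is again nondegenerate, that is, has an isolated singularity at the origin of $\fix(g)$: for invertible $W$ this follows from Proposition~\ref{atomictypes}, because each atomic summand, restricted to a $\Gmax$-fixed coordinate subspace, is a disjoint sum of (smaller) polynomials of atomic type; in general it follows from the ``pointing'' structure on the monomials of $W$ forced by quasihomogeneity and nondegeneracy. Granting this, the partial derivatives $\partial W_g/\partial x_i$ for $i\in I_g$ form a regular sequence in $\C[x_i:i\in I_g]$, so the Koszul complex on them is a $G$-equivariant graded free resolution of $\sQ_{W_g}$. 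As $W_g$ is $G$-invariant, each $\partial W_g/\partial x_i$ transforms by $\rho_i^{-1}$ and has weighted degree $d-c_i$, so the graded Euler characteristic of this resolution yields
\[
P_g(t)=\prod_{i\in I_g}\frac{1-\rho_i^{-1}t^{d-c_i}}{1-\rho_i t^{c_i}}
\]
in $R(G)\otimes\Z[[t]]$ (in fact a genuine polynomial, since $\dim\sQ_{W_g}<\infty$). The right-hand side manifestly depends only on $d$ and the $c_i,\rho_i$ for $i\in I_g$, which proves the theorem.

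The step I expect to be the main obstacle is the nondegeneracy of $W_g$ for \emph{every} $g\in G$: without it neither the regular-sequence argument nor even finite-dimensionality of $\sQ_{W_g}$ is available. Closely related is the need to fix conventions carefully --- which character acts on $x_i$ versus on $\partial W_g/\partial x_i$ --- so that the product formula holds on the nose; once these are settled, the remaining work is bookkeeping with the bidegree shifts $N_g-p-q+\age(g)$ and $p-q+\age(g)$.
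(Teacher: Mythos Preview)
Your argument is correct and is essentially the algebraic proof sketched in the Remark immediately following the paper's proof: compute the $\operatorname{Rep}(G)$-valued Poincar\'e polynomial via the Koszul resolution on the partials and observe that the resulting product formula sees only the characters $\rho_i$ and the weights. The paper's own proof, however, takes a different route: it argues topologically that any two $G$-invariant admissible polynomials with the same weights can be joined by a path in the space of such polynomials (the degenerate locus having complex codimension at least one), and that the relative cohomology $H^\bullet(\C^N,W_t^\infty,\C)^G$ is locally constant along such a path. No Koszul complex or explicit formula enters.

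The trade-off is the usual one. The deformation argument is short and conceptual, and it sidesteps any sector-by-sector computation; on the other hand it gives no closed-form expression for the graded dimensions. Your Koszul approach is more hands-on but yields the explicit product $\prod_{i\in I_g}(1-\rho_i^{-1}t^{d-c_i})/(1-\rho_i t^{c_i})$, which is strictly more information. The obstacle you flag---nondegeneracy of $W_g$---is in fact easy and does not require the atomic classification: if $p\in\fix(g)$ satisfies $\partial_iW(p)=0$ for all $i\in I_g$, then for $i\notin I_g$ the $g$-equivariance $\partial_iW(gp)=\rho_i(g)^{-1}\partial_iW(p)$ together with $gp=p$ and $\rho_i(g)\neq 1$ forces $\partial_iW(p)=0$ as well, so $p$ is critical for $W$ and hence $p=0$. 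With that observation your argument is complete.
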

\begin{proof} If there is a continuous family of $G$-invariant, quasihomogeneous polynomials $W_t:\C^N \to \C$, all with the same weights $q_1,\dots, q_N$, then for each $t_1,t_2$ we have 
$H^\bullet(\C^N,W^{\infty}_{t_1},\C)^G \cong  H^\bullet(\C^N,W^{\infty}_{t_2},\C)^G$.  

The result now follows from the fact that the space of $G$-invariant admissible polynomials with a given set of weights $q_1,\dots, q_N$ is path connected.  This can be seen from the fact that the locus of polynomials with singularities away from the origin is a subvariety of complex codimension at least $1$ in the space of $G$-invariant quasihomogeneous polynomials. 
\end{proof} 
\begin{remark}
As an alternative (algebraic) proof to the previous theorem, one can consider the representation ring $\operatorname{Rep}(G)$ of $G$ and the $\operatorname{Rep}(G)$-valued Poincar\'e polynomial $P(t)$ of the $G$-module $\Omega^N/(dW \wedge\Omega^{N-1})$. The Koszul resolution gives us
\[
P(t) = \prod_{i=1}^N \frac{\rho_i - t^{d-q_i}}{1-\rho_it^{q_i}},
\]
where each $\rho_i$ is the representation of $G$ on the span of the variable $x_i$.
Since this expression for $P(t)$ is obviously independent of $W$, the result follows.  The details are given in \cite{Tay}.
\end{remark}

\subsection{Pairing}\label{sec_pairing}
Next we define a nondegenerate pairing, as required for the construction of our CohFT.   If $W^{-\infty}$ is defined to be 
\[
W^\infty = \left(\Re(W)\right)^{-1}(-\infty,-M)\text{ for }M >>0,
\]
then there is a natural perfect pairing
\[
\langle\ ,  \rangle: H_N(\C^N, W^{-\infty}, \Z)\otimes H_N(\C^N, W^{\infty}, \Z)\rightarrow \Z
\]
defined by intersecting the relative homology cycles. But we need a pairing on $H^N(\C^N,  W^{\infty}, \C)$.

Choose any $\xi$ such that $\xi^d=-1$. Multiplication by the diagonal matrix $(\xi^{n_1}, \dots, \xi^{n_N})$ defines a map
    $I : \C^N\rTo \C^N$
    sending $W^{\infty}\rTo W^{-\infty}$. Hence, it induces an isomorphism
\[
I_*: H_N(\C^N, W^{\infty}, \C)\rTo H_N(\C^N, W^{-\infty},
    \C).
 \]
We define a pairing on $H_N(\C^N, W^{\infty}, \Z)$ by
\[
\langle\Delta_i, \Delta_j\rangle=\langle \Delta_i, I_*(\Delta_j)\rangle.
\]
This induces a pairing (denoted by $\langle \ , \ \rangle$) on the dual space  $H^N(\C^N, W^{\infty}, \C)$.  Changing the choice of $\xi$ will change the isomorphism $I$ by an element of the group $\langle J \rangle$, and $I^2 \in \langle J \rangle$.
Therefore, the pairing is independent of the choice of $I$ on the
invariant subspace $H^N(\C^N,W^{\infty}, \C)^{\langle J \rangle}$ or on
$H^N(\C^N,W^{\infty}, \C)^G$ for any admissible group $G$.

Carefully tracking this pairing through Wall's isomorphism shows that it is equivalent to the residue pairing $\langle \ , \ \rangle_W$ on the Milnor ring, which can be computed by the following equation:
\[
f_1 \cdot f_2 = \frac{\langle f_1, f_2 \rangle_W}{\Hess(W)} + \text{lower order terms}.
\]

Now we define the pairing on the orbifolded state space $\sH_{W,G} = \bigoplus_g \sH_g^G$ by noting that there is a natural isomorphism $\beta_g:\sH_g \cong \sH_{g^{-1}}$, so we may define 
$
\langle \  ,\ \rangle: \sH_{W,G} \times \sH_{W,G} \rightarrow \C$ by 
\[
\langle \fjrw{f_1}{g_1}, \fjrw{f_2}{ g_2} \rangle = 
\begin{cases}
\langle f_1, \beta_{g_1}^{-1} f_2 \rangle_{W_{g_1}} & g_1 = g_2^{-1}\\
                                 0 & g_1 \neq g_2^{-1},
\end{cases}
\]
and extending linearly.

Note that the pairing has bigraded degree $(\hat{c},\hat{c})$, so that for any $p_1,p_2$ we have:
\[
\langle \ ,\ \rangle: \sH_{W,G}^{\hat{c}-p_1,\hat{c} - p_2 } \otimes \sH_{W,G}^{p_1,p_2} \rightarrow \C.
\]

\subsection{The Moduli Space}

A key part of FJRW theory is the stack $\sW_{g,k}^{W,G}$ of stable W-curves, which has a finite morphism to $\mgkbar$.  We define a virtual cycle on $\sW_{g,k}^{W,G}$, and the CohFT classes $\Lambda_{g,k}^{W,G}$ will be defined as the pushforward to $\mgkbar$ of the Poincar\'e dual of the virtual cycle.

Roughly speaking $\sW_{g,k}^{W,G}$ is the stack of stable orbicurves\footnote{For a more thorough discussion of orbicurves and orbifold line bundles, see \cite[\S2.1]{FJR}} with one orbifold line bundle $\sL_i$ for each variable $x_i$ with $i\in \{1,\dots,k\}$ such that for each monomial $W_j$ of $W$, we have $W_j(\sL_1,\dots, \sL_k) \cong \omega_{\\log,\sC}$.  But this can be described more cleanly using a reformulation in terms of principal bundles due to Polishchuk-Vaintrob \cite[\S 3.2]{PoVa:11}.

We will use the integer weights $c_1,\dots, c_N,d$ for $W$ with each $q_i= c_i/d$, as described in Remark~\ref{rem:integerweights}. Let $G\subset \aut(W)$ be an admissible group, and let $\Gamma$ be the
subgroup of $(\CC^*)^N$ generated by $G$ and
$\CC^*_R=\{(\lambda^{c_1}, \dots, \lambda^{c_N}) \mid \lambda\in
\CC^*\}$, where this second factor $\CC^*_R$ corresponds to  the
quasihomogeneity of $W$.  It is easy to see that
\begin{equation}\label{eq:GintersectCR}
G\cap \CC^*_R = \genj .
\end{equation}
We define a surjective homomorphism
  $$\chiR: \Gamma\rightarrow \CC^*$$ by sending $G$ to $1$ and
$(\lambda^{c_1}, \dots, \lambda^{c_N})$ to
$\lambda^d$. Equation~\eqref{eq:GintersectCR} shows that the map $\chiR$
is well-defined and that $\ker(\chiR)=G$.  Let $\pklogc$ denote the
principal $\CC^*$-bundle associated to $\klogc$.
   
\begin{defn}
  A $\Gamma$-structure on an orbicurve $\Ccal$ is
\begin{enumerate}
\item A principal $\Gamma$-bundle $\Pcal$ on $\Ccal$ such that the
  corresponding map $\Ccal \to \B\Gamma$ to the classifying stack
  $\B\Gamma = [pt/\Gamma]$ is representable.
\item A choice of isomorphism $\spn:(\chiR)_*\Pcal \cong
  \pklogc$.  Here $(\chiR)_*\Pcal$ denotes the principal $\CC^*$
  bundle on $\Ccal$ induced from $\Pcal$ by the map $\chiR$.
\end{enumerate}
\end{defn}
An equivalent way to state (2) is to recognize that the homomorphism
$\chiR$ induces a morphism of stacks $\B\chiR:\B\Gamma \to \B \CC^*$
and (2) is equivalent to the requirement that the composition $\B\chiR
\circ \Pcal:\Ccal \to \B\CC^*$ be equal to the morphism of stacks
$\Ccal \to \B\CC^*$ induced by the principal $\C^*$-bundle $\pklogc$.

Given a $\Gamma$-structure on $\Ccal$, the projection $\pi_i:\Gamma \subseteq (\CC^*)^N \to \CC^*$ to the $i$-th
factor for each $i\in \{1,\dots N\}$ defines a collection of line
bundles 
\[\Lcal_1, \cdots, \Lcal_N.\]
 It is easy to check that
$\pi^d_i=\chiR^{c_i}$, and thus we have
$   \Lcal^d_i=\omega^{c_i}_{\Ccal, \log}$.   

Let $W=\sum_j \alpha_j W_j$, where each $W_j$ is a monomial and $\alpha_j\in \C$ is a coefficient.  The monomial $W_j$ induces a homomorphism $(\CC^*)^N\rightarrow \CC^*$. Because $W$ is $G$-invariant, we have $W_j|_G=1$. Therefore, $W_j$ defines a homomorphism $W_j:
\Gamma/G\rightarrow \CC^*$.  

A straightforward check of how  $W_j$ acts on the subgroup
$\CC_R^* = \{(\lambda^{c_1}, \dots, \lambda^{c_N})\}$ shows that this homomorphism is an isomorphism. Hence we have $W_j(\pi_1,
\cdots, \pi_N)=\chiR$. This implies that for each monomial $W_j$ we have 
\[
W_j(\Lcal_1, \dots, \Lcal_N)\cong\omega_{\Ccal, \log}.
\]

Therefore, associated to each $\Gamma$-structure on a pointed stable orbicurve $\Ccal$, there is a collection of line bundles $\Lcal_1,\dots, \Lcal_n$ on $\Ccal$ and isomorphisms 

\begin{equation} \label{eq:wspin}
W_j(\Lcal_1, \dots, \Lcal_N) \cong \omega_{\Ccal, \log}.
\end{equation}
These line bundles play an important role in the construction of the virtual cycle.
The original definition of the moduli problem defining $\sW^{W,G}_{g,k}$ was given in terms of a collection of line bundles satisfying Equation~\eqref{eq:wspin}. 

\begin{defn}
Let $\sW_{g,k}^{W,G}$ be the stack of tuples $(\Ccal,p_1,\dots,p_k,\Pcal, \spn)$, where $\Ccal,p_1,\dots, p_k$ is a stable $k$-pointed orbicurve of genus $g$ and $(\Pcal,\spn)$ is a $\Gamma$-structure on $\Ccal$. 
\end{defn}
\begin{remark}
The stack $\sW_{g,k}^{W,G}$ depends only on $g$, $k$, $G$, and $\Gamma$ and is completely independent of the polynomial $W$.
\end{remark}

We have several natural morphisms of  $\sW_{g,k}^{W,G}$.  The first of these is the forgetful morphism $st:\sW_{g,k}^{W,G} \to \mgkbar$ (analogous to the stabilization morphism of Gromov-Witten theory.
The following result is fundamental. 
\begin{thm}[{\cite[Thm 2.2.6]{FJR}}] 
If $J\in G\subseteq \Gmax$, then for any nonnegative integers $g$ and $k$ with $2g-2+k>0$, the morphism $st:\sW_{g,k}^{W,G} \to \mgkbar$  is proper and quasifinite (but not representable).
\end{thm}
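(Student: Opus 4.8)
The plan is to factor $st$ through the stack of pointed orbicurves, then verify properness by the valuative criterion and quasifiniteness by a fibrewise count. First note that if $(\Ccal,p_1,\dots,p_k,\Pcal,\spn)$ is a point of $\sW_{g,k}^{W,G}$, then at each marked point and each node the monodromy of $\Pcal$ maps under $\chiR$ to the monodromy of $\pklogc$, which is trivial there because $\omega_{\Ccal,\log}$ has trivial local monodromy at marked points and nodes; since $\ker(\chiR)=G$ by \eqref{eq:GintersectCR} --- and this is the one place where the hypothesis $J\in G$ is used --- the local isotropy group of $\Ccal$ at every special point embeds in $G$, so all orbifold orders divide the exponent of $G$. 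Hence $\sW_{g,k}^{W,G}$ is a Deligne--Mumford stack of finite type and $st$ factors as $st=c\circ p$, where $p\colon\sW_{g,k}^{W,G}\to\mathfrak{M}^{\mathrm{orb}}_{g,k}$ remembers only the pointed orbicurve and $c\colon\mathfrak{M}^{\mathrm{orb}}_{g,k}\to\mgkbar$ passes to the coarse curve, $\mathfrak{M}^{\mathrm{orb}}_{g,k}$ being the stack of $k$-pointed twisted curves of genus $g$ with cyclic isotropy of bounded order. The morphism $c$ is proper and quasifinite by the theory of twisted curves (Abramovich--Vistoli, Olsson), so it remains to treat $p$.

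\emph{Quasifiniteness.} Fix a geometric point $\Ccal\in\mathfrak{M}^{\mathrm{orb}}_{g,k}$. By the Polishchuk--Vaintrob description, a $\Gamma$-structure on $\Ccal$ is a lift of the fixed principal $\CC^*$-bundle $\pklogc$ along $\B\chiR\colon\B\Gamma\to\B\CC^*$, together with the choice of $\spn$. As $\ker(\chiR)=G$ is finite, the set of isomorphism classes of such lifts is either empty or a torsor under $H^1(\Ccal,G)$, which is finite because $G$ is finite and $\Ccal$ has finite topological type; requiring representability only removes some lifts, and the automorphisms of a fixed $\Gamma$-structure that fix $\Ccal$ form exactly the finite group $G$ (acting by the constant scalars $\gamma\in G$ on the $\Lcal_i$, compatibly with $\spn$ because $\chiR(G)=1$). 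Thus the fibre of $p$ is a finite groupoid, $p$ is quasifinite, and so is $st=c\circ p$.

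\emph{Properness.} I verify the valuative criterion over a discrete valuation ring $R$ with fraction field $K$. For uniqueness of limits, two $\Gamma$-structures on a family $\Ccal_R\to\operatorname{Spec} R$ of pointed orbicurves that agree over $K$ induce line bundles agreeing over $K$; although a line bundle on $\Ccal_K$ may a priori be extended in several ways (twisting by special-fibre components) and the twisted structure at a special-fibre node is not a priori determined by $\Ccal_K$, the relations $\Lcal_i^d\cong\omega^{c_i}_{\Ccal,\log}$ of \eqref{eq:wspin} rigidify both --- altering a $d$-th root of the fixed bundle $\omega^{c_i}_{\Ccal,\log}$ by a special-fibre twist changes its $d$-th power, and balancedness at the nodes then forces the orbifold orders --- so the two $\Gamma$-structures coincide over $R$, and $p$ is separated. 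For existence of limits, given $\Ccal_R\to\operatorname{Spec} R$ and a $\Gamma$-structure over $\Ccal_K$, I must produce, after a finite extension $R\subseteq R'$, a $\Gamma$-structure over $\Ccal_{R'}$ restricting to it. By the definition of a $\Gamma$-structure this amounts to extending the line bundles $\Lcal_1,\dots,\Lcal_N$ over $\Ccal_{R'}$ while preserving the isomorphisms of \eqref{eq:wspin}, that is, $\Lcal_i^d\cong\omega^{c_i}_{\Ccal,\log}$, and preserving representability: after a ramified base change making the total space normal one adjusts the multidegrees of the $\Lcal_i$ along the components of the special fibre and installs cyclic orbifold structure of suitable orders at its nodes so that the $d$-th power relations persist, a combinatorial argument on the dual graph of the special fibre that is formally identical to the stable-reduction theorems for $r$-spin and generalized spin curves (Jarvis; Chiodo; Abramovich--Jarvis) and which I take as the engine of the proof; repackaging the extended bundles into a principal $\Gamma$-bundle through the tautological monomial isomorphisms, with nodal orbifold orders taken large enough to ensure faithful local action, gives a representable $\Gamma$-structure over $\Ccal_{R'}$. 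Hence $p$, and therefore $st$, is proper.

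\emph{Non-representability and the main obstacle.} Finally, $st$ is not representable: on a smooth curve with trivial generic isotropy, the central element $J\in G\subseteq\Gamma$ acts on each $\Lcal_i$ by the scalar $e^{2\pi i q_i}$, and since $J\neq 1$ this tuple of scalars is a nontrivial automorphism of the $\Gamma$-structure fixing the underlying pointed curve (for $W=x^3$, $G=\mu_3$ this recovers the familiar $\mu_3$ of automorphisms of a $3$-spin structure). The step I expect to be the main obstacle is the existence half of the valuative criterion for $p$ --- arranging the multidegrees and the nodal orbifold orders in the special fibre so that every relation $\Lcal_i^d\cong\omega^{c_i}_{\Ccal,\log}$ survives the degeneration --- which is exactly the delicate point in the spin-curve analogues, and the reason the construction ultimately rests on the classification and stable-reduction results for roots of $\omega_{\Ccal,\log}$.
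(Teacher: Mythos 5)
The paper offers no proof of this statement---it is quoted verbatim from \cite[Thm 2.2.6]{FJR}---so there is nothing internal to compare against; your argument reconstructs the approach of that reference (and of Abramovich--Jarvis/Chiodo): factor $st$ through the stack of twisted curves with isotropy bounded via $\ker(\chiR)=G$, count $\Gamma$-structures fibrewise as a torsor under $H^1(\Ccal,G)$, and reduce properness to stable reduction for $d$-th roots of $\omega^{c_i}_{\Ccal,\log}$, with non-representability witnessed by the automorphism $J$. This is essentially the same route as the cited source, and correct in outline; the only step you leave genuinely schematic (and correctly flag as the crux) is promoting the extended line bundles back to a principal $\Gamma$-bundle, which requires extending the trivializations of the monomials corresponding to relations in the character group of $\Gamma$, not just the $\Lcal_i$ themselves.
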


\begin{example}
If $g=0$ and $k=1$, then for any admissible group $G$, for each choice of orbifold three-pointed genus-zero curve, there is at most one $\Gamma$-structure on that curve.  Moreover, the automorphisms of the $\Gamma$-structure are exactly $G$.  So for any particular choice of $\bgamma = (\gamma_1,\gamma_2,\gamma_3)$ the stack
$\sW^{W,G}_{0,3}(\bgamma)$ is either empty or is isomorphic to $\B G$.
\end{example}

\begin{remark}
If the grading element $J$ is not contained in $G$, it is not hard to show that the stack $\sW_{g,k}^{W,G}$ is not proper for general $g$ and $k$. 
\end{remark}

We also have an analogue of the evaluation morphism.
Let $G_{p_i}$ be the local group of $\Ccal$ at the marked point $p_i$.  Since we are working over $\C$, it has a canonical generator.  Let $\gamma_{p_i}$ denote this canonical generator.
 The morphism $\Pcal:\Ccal \to \B\Gamma$ implies that each $G_{p_i}$ has a
homomorphism to $\Gamma$.   
The fact that $\omega_{\Ccal, \log}$ has no orbifold structure at any marked point implies that
$G_{p_i}$ actually maps to $\ker(\chiR)=G\subset \Gamma$. And
representability of the morphism $\Pcal: \Ccal \to \B\Gamma$ implies that the
map $G_{p_i} \to \ker(\chiR)=G$ is injective.

Thus for each $i\in \{1,\dots, k\}$ we have an ``evaluation'' map $ev_i$ mapping  $\sW_{g,k}^{W,G}$ to $G$, or rather to the inertia stack $I[\C^N/G] = [\C^N/G] \sqcup \coprod_{\gamma\neq1 \in G} BG$.  This map is given by sending $(\Ccal,p_1,\dots, p_k, \Pcal, \spn)$ to $[0/G]\subset [\C^N/G]$ if $\gamma_{p_i} = 1$, and otherwise sending it to the copy of $\B G$ corresponding to the image of $\gamma_{p_i}$ in $G$.  

This gives us a decomposition of $\sW_{g,k}^{W,G}$ into open and closed substacks
\[
\sW_{g,k}^{W,G} = \coprod_{\gamma_1,\dots,\gamma_k} \sW_{g,k}^{W,G}(\gamma_1,\dots, \gamma_k),
\]
where each $\sW_{g,k}^{W,G}(\gamma_1,\dots,\gamma_k)$ is the locus where the $i$th evaluation map lies in the copy of $\B G$ corresponding to the element $\gamma_i \in G$.

Finally we have a ``forgetting tails'' morphism.  If the canonical generator of $G_{p_i}$ maps to $J\in G$, then it is not hard to show that we can ``desingularize'' the line bundles $(\chiR)_*\Pcal$ and $\pklogc$ by forgetting the orbifold structure at $p_i$, and in a neighborhood of $p_i$ the new line bundles are isomorphic to $\pkk_{\Ccal}$ (see \cite[\S2.2.3]{FJR} for details).
Thus, if we start with a $k+1$-pointed curve marked with $J$ at the $i$th point, then forgetting the marked point $p_i$ means the homomorphism $\spn$ maps $\Pcal$ to the log-canonical bundle of the corresponding $k$-pointed curve.  This gives us a morphism 
\[
\sW_{g,k+1}^{W,G}(\gamma_1,\dots, J, \dots, \gamma_k) \to \sW_{g,k}^{W,G}(\gamma_1, \dots, \gamma_k), 
\]
which we call \emph{forgetting tails}.  Note that we can only forget tails which are marked with the element $J$.  Forgetting any other tail will fail to produce a $\Gamma$-structure on the resulting curve.

\subsection{Virtual cycle}\label{sec_virtcycle}

The virtual cycle of FJRW theory is a homology class $\left[\sW_{g.k}^{W,G}(\gamma_1,\dots,\gamma_k)\right]^{virt}$ on $\sW^{W,G}_{g,k}(\gamma_1,\dots,\gamma_k)$ satisfying certain key properties or ``axioms,'' which we will describe below.

There are several definitions of the virtual class for FJRW theory.    
The original idea for the virtual class in the case where $W = x^r$ ($r$-spin curves) is due to Witten \cite{Wit:93}.  An algebraic construction for $r$-spin curves was given by Polishchuk-Vaintrob \cite{PoVa:01} and a K-theoretic construction given by Chiodo \cite{Chiodo:06a,Chiodo:06b}.  An analytic construction for the $r$-spin virtual class based on Witten's original outline was given by T.~Mochizuki \cite{Moc:06}.

In the case of a general polynomial and group, an analytic construction of the virtual class was given in \cite{FJR-ip-WEVFC} using the first-order elliptic PDE 
       \begin{equation}\label{eq:WittenEq}
       \bar{\partial} s_i+\overline{\frac{\partial W}{\bar{\partial}
    s_i}}=0,\end{equation}
    called the \emph{Witten Equation}, where each $s_i$ is a $C^{\infty}$-section of the $i$th line bundle $\Lcal_i$ of our $\Gamma$-structure.

An algebraic definition of the virtual class was given by Polishchuk-Vaintrob in \cite{PoVa:11} using graded matrix factorizations. And more recently Chang-Li-Li \cite{CLL} 
used the cosection localization technique of Chang-Kiem-Li \cite{KiemLi, ChangLi}
to construct the virtual cycle on the stacks $\sW_{g,k}^{W,G}(\gamma_1, \dots, \gamma_k)$ where every $\gamma_i$ is narrow (see Remark~\ref{def:narrow}).  They prove that their construction agrees with both the analytic construction of Fan-Jarvis-Ruan and the algebraic construction of Polishchuk-Vaintrob in the narrow case.  

Polishchuk-Vaintrob showed that the algebraic and analytic virtual cycles agree for all simple (ADE) singularities (these are polynomials with central charge $\hat{c}<1$).  Gu\'er\'e has also shown that for chain-type polynomials all these constructions of the virtual class coincide, up to a rescaling of the broad sectors \cite[Thm 3.24]{Guere}.
Unfortunately, it is not yet known whether the algebraic and analytic constructions agree with arbitrary broad insertions for more general polynomials. 

Pushing the virtual cycle down to $\mgkbar$ gives the FJRW CohFT.  
\begin{defn}
For each $i\in \{1,\dots, k\}$ let $\alpha_i \in \sH_{\gamma_i}$. 
We define $\Lambda_{g,k}^{W,G}(\alpha_1,\dots, \alpha_k) \in H^*(\mgkbar)$ as
\begin{equation}\label{eq_Lambda}
 \Lambda_{g,k}^{W,G}(\alpha_1,\dots,\alpha_k) = \frac{|G|^g}{\deg st} PD st_*\left(\left[\sW^{W.G}_{g,k}(\gamma_1,\dots,\gamma_k)\right]^{virt}\cap \prod_{i=1}^k ev_i^* \alpha_i \right).
\end{equation}
This definition is extended linearly to all choices of $k$-tuples $\alpha_1,\dots, \alpha_k \in \sH_{W,G}$.
\end{defn}

\begin{thm}\cite{FJR}
The collection of cohomology classes given by $\{\Lambda_{g,k}^{W,G}\}$ as defined  in Equation~\eqref{eq_Lambda} gives a CohFT with flat identity element equal to $\one\in \sH_J$, and satisfies the following additional properties:
\begin{enumerate}
%%%%%%%%This is already in the CohFT axioms
%\item \textbf{Forgetting tails:} \label{prop_first} If $\tau:\bMgn{g}{k+1} \to \mgkbar$ is the usual forgetting tails map for stable curves, and $\one\in \sH_J$ is the identity element then 
%\[\Lambda^{W,G}_{g,k}(\alpha_1, \ldots, \alpha_{k-1},\one) = \tau^* \Lambda^{W,G}_{g,k}(\alpha_1, \ldots, \alpha_{k-1})\]
%and 
%\[
%\Lambda^{W,G}_{0,3}(\alpha_1, \alpha_2,\one) = \langle \alpha_1,\alpha_2\rangle,
%\]
%where $\langle\ ,\ \rangle\sH_{W,G}\otimes \sH_{W,G} \to \C$ is the pairing defined in Section~\ref{sec_pairing}.
 
\item \textbf{Dimension:} \label{prop_dimension}
If $\alpha_i \in \sH_{\gamma_i}$ for each $i\in \{1,\dots, k\}$, then 
the dimension of $\Lambda^{W,G}_{g,k}(\alpha_1,\dots, \alpha_k)$ is given by
\[
dim(\Lambda^{W,G}_{g,k}(\alpha_1,\dots, \alpha_k)) = (\hat{c} - 3)(1-g) + k - \sum_{i = 1}^k \deg_\C \alpha_i.
\]

\item \textbf{Integral Degrees:}
If $\alpha_i \in \sH_{\gamma_i}$ for each $i\in \{1,\dots, k\}$, and if
 the degree of the desingularization $|\sL_i|$ of each line bundle $\sL_i$ defined by the $\Gamma$-structure on $\sW^{W,G}_{g,k}(\gamma_1,\dots, \gamma_k)$ is not integral, then $\Lambda^{W,G}_{g,k}(\alpha_1,\dots, \alpha_k) = 0$.

\item \textbf{$\Gmax$ Invariance:} 
Letting the group $\Gmax$ act on $\sH_{W,G}$ in the obvious way, the 
class $\Lambda_{g,k}^{W,G}(\alpha_1,\dots, \alpha_k)$ is $\Gmax$-invariant.

\item \textbf{Disjoint Sums:}
\label{ax_sumsing}
If $W_1 \in \C[x_1,\dots, x_n]$ and $W_2\in \C[y_1,\dots, y_m]$ are each nondegenerate quasihomogeneous polynomials with $G_1$ admissible for $W_1$ and $G_2$ admissible for $W_2$, and if $\alpha_1,\dots, \alpha_k \in \sH_{W_1,G_1}$ and $\beta_1,\dots, \beta_k \in \sH_{W_2,G_2}$,
then $\sH_{W_1+W_2,G_1\times G_2} = \sH_{W_1,G_1}\otimes \sH_{W_2,G_2}$ and 
\[
\Lambda^{W_1 + W_2, G_1 \times G_2}(\alpha_1\otimes\beta_1,\dots, \alpha_k\otimes\beta_k) = \Lambda^{W_1, G_1}_{g,k}(\alpha_1,\dots,\alpha_k)  \Lambda^{W_2,G_2}_{g,k}(\beta_1,\dots,\beta_k).
\]

\item \textbf{Deformation Invariance:}
\label{ax_definv}
Let $W_t$ be a one-parameter family (one real parameter) of non-degenerate polynomials of a given weight $(q_1,\dots, q_N)$, and let $G$ be a symmetry group which is admissible for each $W_t$. The family $W_t$ defines an isomorphism of the state spaces $\sH_{W_t,G}$ to $\sH_{W_{t_0},G}$ for any $t_0$.  If $\alpha_1,\dots, \alpha_k \in \sH_{W_{t_0},G}$, then (using the corresponding family of isomorphisms) the class $\Lambda^{W_t,G}_{g,k}(\alpha_1,\dots, \alpha_k)$ is independent of $t$.

\item \textbf{Topological Euler Class:}  Let $\sL_1,\dots,\sL_k$ be the line bundles induced by the universal $\Gamma$-structure on the universal curve $\pi:\Ccal\to \sW^{W,G}_{g,k}(\gamma_1,\dots,\gamma_k)$.

If each $\gamma_i$ is narrow (i.e., the fixed locus of $\gamma_i$ is $0\in \C^N$) for each $i\in \{1,\dots,k\}$, then we can define a particular map (the \emph{Witten map}) $\mathscr{D}:\pi_*(\Lcal_1\oplus \cdots\oplus\Lcal_k) \to R^1\pi_*(\Lcal_1\oplus \cdots\oplus\Lcal_k)$ in the derived category of $\sW^{W,G}_{g,k}(\gamma_1,\dots,\gamma_k)$
such that the  
virtual class $[\sW^{W,G}_{g,k}(\gamma_1,\dots,\gamma_k)]^{virt}$ is Poincar\'e dual to a topological Euler class for $\mathscr{D}$.
This implies the following additional properties hold:
	\begin{enumerate}
    \item {\bf Concavity:}\label{ax:convex}
    Suppose that each $\gamma_i$ is narrow.  If
$\pi_*\left(\bigoplus_{i=1}^t\Lcal_i\right)=0$, then the virtual
cycle is given by capping the top Chern class of  $\left(R^1 \pi_* \left(\bigoplus_{i=1}^t\Lcal_i\right)\right)^*$ 
with the usual fundamental cycle of the moduli space:
\[
\left[ \sW^{W,G}_{g,k}(\gamma_1,\dots, \gamma_k) \right]^{vir} = 
c_{top}\left(\left(R^1\pi_*\bigoplus_{i=1}^t\Lcal_i \right)^*\right) \cap \left[\sW^{W,G}_{g,k}(\gamma_1,\dots, \gamma_k)\right]
\]
\item   {\bf  Index zero:} \label{ax:wittenmap} Suppose that  $\dim (\sW^{W,G}_{g,k}(\gamma_1,\dots, \gamma_k))=0$
and each $\gamma_i$ is narrow.

If the pushforwards $\pi_* \left(\bigoplus\Lcal_i\right)$ and $R^1\pi_*
\left(\bigoplus \Lcal_i\right)$ are both vector bundles of the same
rank, then the virtual cycle is just the degree of the
Witten map times the fundamental cycle.
	\end{enumerate}
\end{enumerate}
\end{thm}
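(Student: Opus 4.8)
The plan is to verify, in turn, the four CohFT axioms and then the six additional properties, in each case reducing the claim to a property of the virtual cycle $[\sW^{W,G}_{g,k}(\gamma_1,\dots,\gamma_k)]^{virt}$ together with Riemann--Roch on the universal orbicurve. For the CohFT axioms, $S_k$-equivariance of $\Lambda^{W,G}_{g,k}$ is immediate, since the stack $\sW^{W,G}_{g,k}$, the line bundles $\Lcal_i$, the evaluation maps $ev_i$, and the virtual cycle are all constructed symmetrically in the marked points. The tree and loop gluing axioms are the substantive point: one pulls the virtual cycle back along the gluing morphism on the level of orbicurves with $\Gamma$-structures and identifies the result with the appropriate product (respectively sum over a basis of $\sH_{W,G}$) of virtual cycles on the boundary pieces. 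This rests on a splitting/composition property of the virtual cycle under normalization of a nodal $W$-curve, together with the observation that a $\Gamma$-structure on a nodal curve is the same as a pair of $\Gamma$-structures on the components whose evaluations at the node are inverse to one another; the factor $\eta^{\sigma\tau}$ in the axiom is exactly the residue pairing realizing $\sH_\gamma\cong\sH_{\gamma^{-1}}$, and the normalization constants $|G|^g/\deg st$ are chosen precisely so that the identities hold after pushing forward to $\mgkbar$. The forgetting-tails axiom follows from the forgetting-tails morphism for $\Gamma$-structures marked with $J$: desingularizing at a $J$-marked point is compatible with the virtual cycle and with the evaluation maps, and the insertion $\one\in\sH_J$ contributes trivially. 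Finally, axiom (e) is a direct computation on $\sW^{W,G}_{0,3}(\gamma_1,\gamma_2,J)$, which is empty or $\B G$ and carries virtual cycle $0$ or the fundamental class; tracking the evaluation maps and the residue pairing through Wall's isomorphism gives $\langle\alpha_1,\alpha_2\rangle$.

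For the additional properties, most are functoriality statements. The \textbf{Dimension} property is the Riemann--Roch / index computation: since $st$ is quasifinite, $\sW^{W,G}_{g,k}(\gamma_1,\dots,\gamma_k)$ has dimension $3g-3+k$, and the virtual cycle has complex dimension equal to this shifted by the index of the Witten map, which Riemann--Roch on the universal orbicurve expresses through $\deg\Lcal_i$, the ages $\age(\gamma_i)$, and $\hat{c}$; capping with $\prod_i ev_i^*\alpha_i$ and pushing to $\mgkbar$ yields $(\hat{c}-3)(1-g)+k-\sum_i\deg_\C\alpha_i$. The \textbf{Integral Degrees} property is the observation that if the desingularized degree of some $\Lcal_i$ is not an integer then no such line bundle exists on the coarse curve, so $\sW^{W,G}_{g,k}(\gamma_1,\dots,\gamma_k)$ is empty and $\Lambda=0$. \textbf{$\Gmax$ Invariance} holds because $\Gmax$ acts compatibly on $\C^N$, on the moduli stack, on the $\Lcal_i$ and hence on the virtual cycle, but acts trivially on $\mgkbar$, so invariance of the cycle transports through $st_*$. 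For \textbf{Disjoint Sums}, a $\Gamma$-structure for $(W_1+W_2,G_1\times G_2)$ on a fixed orbicurve is precisely a pair of a $\Gamma_1$-structure and a $\Gamma_2$-structure, so the moduli stack is the fiber product of $\sW^{W_1,G_1}_{g,k}$ and $\sW^{W_2,G_2}_{g,k}$ over the stack of pointed orbicurves, the Witten equation decouples into the two systems, the virtual cycle multiplies, and combining the state-space identity with the projection formula produces the product formula.

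\textbf{Deformation Invariance} follows because the stack $\sW^{W_t,G}_{g,k}$ does not depend on $t$ (only on $g,k,G,\Gamma$), while the virtual cycle is a deformation-invariant homology class: the Witten equation for $W_t$ — or, in the algebraic approach, the relevant matrix factorization — varies continuously in $t$, yielding a cobordism of cycles. For the \textbf{Topological Euler Class} property, in the narrow case the only data are the line bundles $\Lcal_1,\dots,\Lcal_k$, and the Witten equation produces a two-term complex $\mathscr{D}:\pi_*(\bigoplus_i\Lcal_i)\to R^1\pi_*(\bigoplus_i\Lcal_i)$ whose localized topological Euler class is the virtual cycle; \textbf{Concavity} is the special case $\pi_*(\bigoplus_i\Lcal_i)=0$, where the complex reduces to the bundle $R^1\pi_*(\bigoplus_i\Lcal_i)$ and the Euler class becomes $c_{top}$ of its dual, while \textbf{Index zero} is the case where both pushforwards are bundles of equal rank and the dimension is zero, so the Euler class of the complex is the degree of the section $\mathscr{D}$ — that is, the degree of the Witten map — times the point class.

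The hard part is not this bookkeeping but the input it relies on: constructing a virtual cycle that behaves correctly under the gluing morphisms of orbicurves with $\Gamma$-structures. In the analytic construction this means controlling solutions of the Witten equation on nodal $W$-curves and proving the splitting property; in the algebraic construction it means handling matrix factorizations and the associated Koszul-type cycles on nodal curves. Together with the Riemann--Roch index computation underlying the Dimension property, this is where essentially all the real work lies; once it is in hand, the remaining properties reduce, as above, to functoriality and equivariance.
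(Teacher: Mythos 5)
The survey states this theorem with a citation to [FJR] and gives no proof of its own, and your outline follows exactly the strategy of that source: push the axioms of the virtual class on $\sW^{W,G}_{g,k}(\gamma_1,\dots,\gamma_k)$ (composition under gluing, forgetting $J$-tails, index/Riemann--Roch, emptiness for non-integral degrees, product decomposition for disjoint sums, deformation invariance, and the Euler-class description in the narrow case) through $st_*$ to obtain the CohFT properties on $\mgkbar$. You also correctly locate where the real content lives --- in the construction of a virtual cycle satisfying those axioms, carried out in the analytic and algebraic companion works --- so the proposal is a faithful, if necessarily schematic, account of the same approach.
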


The dimension, integral degrees, and $\Gmax$ invariance properties all give selection rules that force many of these classes to vanish.  The other main tool for computing these classes is the Concavity Axiom, combined with reconstruction rules arising from the WDVV equation (see \cite{Francis} for many examples of how this works).  J\'er\'emy Gu\'er\'e has also been able to use the Polishchuk-Vaintrob matrix-factorization approach to compute the virtual classes in several cases where the insertions are narrow but not concave \cite{Guere}.

\section{Brief Survey of Landau-Ginzburg Mirror Symmetry}\label{sec:mirror}

For each invertible polynomial $W$ and each group $G\subseteq G_{\max,W}$ of diagonal symmetries of $W$, the Berlund-H\"ubsch-Henningson-Krawitz (BHHK) mirror construction produces  a ``transpose" polynomial $W^T$ and a ``transpose'' group $G^T$, where $W^T$ is again an invertible polynomial, and $G^T$ is a diagonal group of symmetries of $W^T$.

The construction of the transpose is very simple.  First, since the polynomial $W$ is invertible, we may rescale the variables so that $W$ can be written as a sum of monomials all with coefficient $1$.
\[
W =\sum_{i=1}^N \prod_{j=1}^N x_j^{a_{ij}}.
\]
Let the matrix $E = (a_{ij})$ have the exponents of the various monomials for its entries.  It is not hard to show that this matrix is invertible when the polynomial $W$ is invertible.

The BHHK mirror $W^T$ is the polynomial corresponding to the exponent matrix $E^T$:
\[
W^T = \sum_{i=1}^N \prod_{j=1}^N x_j^{a_{ji}}.
\]
Now, the group $\Gmax$ is isomorphic to the additive group 
\[
\{\g = (g_1,\dots, g_N)^T \in (\Q/\Z)^N \mid E\g \in \Z^N \},
\]
 and we may write any subgroup $G$ in additive notation. 
In this additive notation, the BHHK mirror group $G^T$ is the group 
\[
G^T = \{ \h \in (\Q/\Z)^N \mid \h^T E \g \in \Z, \forall \g\in G\}.
\]
The following properties of $G^T$ are easy to verify:
\begin{enumerate}
\item If $G\subseteq G_{\max,W}$ then $G^T\subseteq G_{\max,W}$.
\item $G\subseteq G_{\max,W}$ contains $J$ if and only if $G^T \subset G_{\max,W^T} \cap SL(N)$.
\item $G_{\max,W}^T = \{0\}$.
\item For any subgroups $G_1 \subseteq G_2\subseteq G_{\max,W}$ we have $G_2^T\subseteq G_1^T$ and 
\[G_2/G_1 \cong G_1^T/G_2^T.\]
\end{enumerate}

The guiding conjecture for Landau-Ginzburg mirror symmetry is the following.
\begin{conj}[Landau-Ginzburg Mirror Conjecture]
If $W$ is an invertible polynomial and $G$ is an admissible group of symmetries of $W$, then there should be a Landau-Ginzburg $B$-model associated to $W^T$ and $G^T$ such that the FJRW theory (A model) of $W,G$ is isomorphic to the Landau-Ginzburg B model of $W^T,G^T$.
\end{conj}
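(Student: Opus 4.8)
Although the conjecture is open in general, the natural strategy — carried out so far only in special cases — proceeds in three stages, paralleling the structure of the theories involved: first an isomorphism of bigraded state spaces, then a matching of the genus-zero Frobenius-manifold structures, and finally an all-genus upgrade via semisimplicity.

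\emph{Stage 1: the mirror map on state spaces.} Using the presentation $\sH_{W,G}\cong\bigoplus_g(\sQ_{W_g}\,d\bx_{\fix(g)})^G$ together with the additive descriptions $\Gmax\cong\{\g\mid E\g\in\Z^N\}$ and $G^T=\{\h\mid\h^TE\g\in\Z\ \forall\g\in G\}$, I would write down Krawitz's combinatorial transpose bijection between monomial basis elements: a basis vector $\fjrw{\prod_{i\in I_g}x_i^{a_i}}{g}$ of $\sH_{W,G}$ is sent to a basis vector of $\sH_{W^T,G^T}$ whose sector $h$ is read off (via $E^{-1}$) from the exponents $(a_i)$ and whose monomial, in the variables \emph{not} fixed by $h$, is read off from $g$. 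One then verifies three compatibilities: that the bijection is $G$- and $G^T$-equivariant so that it descends to the invariant subspaces; that it intertwines the bigradings up to the mirror reflection dictated by Hodge theory (using $\hat{c}_W=\hat{c}_{W^T}$ and the identity $\age(g)+\age(g^{-1})=N-N_g$); and that it is an isometry for the residue pairings of Section~\ref{sec_pairing} up to an overall sign. This stage is essentially linear algebra and is known for several families; the delicate point is the sign bookkeeping coming from the choices of volume forms $d\bx_{\fix(g)}$ and from the twisting isomorphism $I$.

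\emph{Stage 2: genus-zero matching.} On the $W^T$ side one must first \emph{construct} the $B$-model: the $G^T$-orbifolded Saito theory, i.e.\ the $G^T$-twisted Milnor ring $\bigoplus_h(\sQ_{W^T_h}\,d\bx)^{G^T}$ equipped with a $G^T$-equivariant primitive form, together with the Frobenius manifold it determines via Saito's flat structure and Givental's quantization. When $G^T=\{0\}$ (equivalently $G=\Gmax$) this is classical Saito--Givental theory; for nontrivial $G^T$ the orbifold primitive-form theory is at present developed only in special cases, and supplying it in general is one of the two principal obstacles. Granting the $B$-model, I would match the two Frobenius manifolds by reconstruction: the FJRW Frobenius manifold is generated over the flat identity $\one\in\sH_J$ by low-degree insertions (generically the $\deg_\C=1$ part), so by the WDVV equation it is determined by finitely many three- and four-point correlators; I would compute these on the $A$ side from the Dimension, Concavity, Index-Zero and $\Gmax$-invariance properties of the FJRW CohFT and on the $B$ side from residues, checking agreement after the mirror change of flat coordinates. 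Showing that the grading-preserving identification of Stage 1 \emph{is} this mirror map (up to that coordinate change) is itself a nontrivial input.

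\emph{Stage 3: higher genus.} Both CohFTs become semisimple after a generic deformation of their common Frobenius manifold, so by Teleman's classification each is reconstructed from its genus-zero data together with the unique $R$-matrix calibrating that Frobenius manifold; the Stage-2 isomorphism then propagates to all $g$, with the possibly non-semisimple origin — the point at which the theories are actually defined — reached by analytic continuation. The main obstacles are, as indicated, (i) the construction and control of the orbifolded $B$-model for general $G^T$, and (ii) the broad sectors on the $A$ side: the virtual cycle with broad insertions is not yet understood for general $W$ (Section~\ref{sec_virtcycle}), so for now the full program goes through only for polynomials all of whose relevant insertions are narrow or reduce to Fermat, chain, and loop pieces (Proposition~\ref{atomictypes}), and for restricted classes of groups.
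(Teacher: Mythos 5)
The statement you are asked to prove is labeled a \emph{conjecture} in the paper, and the paper offers no proof of it; Section~\ref{sec:mirror} is a survey of partial results, not an argument. Your submission is accordingly not a proof but a research program, and you are honest about this. As a program it tracks the paper's own account fairly closely: Stage~1 is Krawitz's Frobenius-algebra isomorphism (proved for $G=\Gmax$ and, by \cite{FJJS}, for a large class of orbifolded cases with some chain-type cases still open), Stage~2 is the primitive-form identification of He--Li--Shen--Webb and the WDVV reconstruction strategy, and Stage~3 is the Teleman argument. So there is no divergence of ``approach'' to report — only the following substantive error.

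Your Stage~3 asserts that ``both CohFTs become semisimple after a generic deformation of their common Frobenius manifold,'' and uses this to propagate the isomorphism to all genera in general. The paper explicitly contradicts this for the orbifolded case: unlike the unorbifolded Saito Frobenius manifolds, the orbifolded B-model Frobenius manifolds (for $G^T \neq \{0\}$, i.e.\ $G \neq \Gmax$) are not expected to be semisimple or even generically semisimple, so ``proving mirror symmetry in genus zero will not automatically give the result in higher genera.'' Generic semisimplicity is a theorem for Saito's construction, not a fact you may assume for a $B$-model that has not even been constructed yet; the Teleman step is available only when $G=\Gmax$. A second, smaller point: your closing restriction to ``polynomials \ldots [that] reduce to Fermat, chain, and loop pieces'' is vacuous, since by Proposition~\ref{atomictypes} \emph{every} invertible polynomial is a disjoint sum of such pieces; the actual open obstructions the paper identifies are the missing orbifolded $B$-model, certain chain-type cases at the Frobenius-algebra level, and the broad-sector virtual cycle — the last of which you do state correctly.
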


\subsection{Mirror Symmetry with Saito-Givental}
The first step in mirror symmetry is to identify the appropriate B model.  In the case that $G = G_{\max,W}$ the dual group $G^T$ is trivial, and the Landau-Ginzburg B model is ``unorbifolded.''  In this case the B model can be constructed from  Saito-Givental theory.
To any quasihomogeneous polynomial $f$ with an isolated singularity at the origin, we can consider the local algebra (a.k.a. Milnor ring) $\sQ_f$ of $f$.  This is a Frobenius algebra with the residue paring.  

Given a choice of primitive form, K.~Saito \cite{Sai1,Sai2,Sai3,SaiTak}
has constructed a semi-simple Frobenius manifold from $f$ that agrees with $\sQ_f$ at the origin.  Since this Frobenius manifold is generically semisimple, one can (with some work) use Givental's theory of higher-genus Frobenius manifolds \cite{Giv:quantization,givental}
 to construct a potential function that behaves as if it came from a CohFT.

For more general $G$, the construction of the B model is not yet entirely known.
Kaufmann and Krawitz \cite{Kr,kau1, kau2,kau3},
following ideas of Intriligator-Vafa \cite{IV} 
showed how to construct an ``orbifolded'' Milnor ring $[\sQ_{W^T}/G^T]$ that is  a Frobenius algebra.  In a few special cases candidate orbifolded Frobenius manifolds have been constructed \cite{BasTak, BLee},
but there is still no general construction of an appropriate B model orbifolded Frobenius manifold.

The Landau-Ginzburg Mirror Symmetry conjecture was proved for the $A_n$ singularity (corresponding to the polynomial $W = x^{n+1}$ with group $\Gmax = \genj$) in genus $0$ in \cite{wit:93} (see also \cite{jkv}), 
in genus $1$ and $2$ by Y.P. Lee \cite{Lee}, 
and in higher genus by \cite{FSZ}. 
The conjecture was proved for all remaining simple singularities in \cite{FJR}.  For all the simple singularities except $D_n$ with $n$ even, the only admissible group is the maximal group $\Gmax$, and all primitive forms differ only by scalar multiplication, so for these singularities, there is no orbifolding on the B side and there is a clear choice of Saito-Givental construction to play the role of the B model.  

However, in the case of $D_n$ with $n$ even, the group $\genj$ has index $2$ inside of $\Gmax$, and so the B side is should be orbifolded by $\Z/2$. 
 In this special case, it turns out that the orbifolded Frobenius algebra on the B side is isomorphic to the unorbifolded Frobenius algebra for $D_n$ again, so one might expect that in this case the entire orbifolded B model should be isomorphic to the unorbifolded B model for $D_n$, and indeed \cite{FJR,FFJMR} shows that the FRJW theory of $D_n$ with the group $\genj$ (and $n$ even) is isomorphic to the unorbifolded Saito-Givental B model for $D_n$.

For more general polynomials, whenever $G = \Gmax$ then $G^T = \{0\}$. In this case LG mirror symmetry is completely proved.  First, Krawitz \cite{Krthesis} 
proved the conjecture for Frobenius algebras (the case of $g=0$ and $k=3$), 
But for Frobenius manifolds ($g=0$ and all $k\ge 3$), the problem is complicated by the fact that one must identify the correct primitive form to define the Saito Frobenius manifold on the B side.  This was done and the Mirror Conjecture proved for simple elliptic singularities by Krawitz, Milanov and Shen \cite{ShKr, MiSh:12} (see also \cite{SaTa}).  The Mirror Conjecture was proved for Arnold's 14 exceptional unimodular singularities by Li-Li-Saito-Shen \cite{LLSS,SaitoMS}. Finally, He-Li-Shen-Webb \cite{HLSW} 
have identified the appropriate primitive form for the Saito theory in general and have proved that Landau-Ginzburg Mirror Symmetry holds for arbitrary invertible polynomials with $G=\Gmax$.  Since Saito's Frobenius manifolds are generically semisimple, one can then use Telemann's theorem \cite{Tel:07} to show that Givental's higher genus construction also agrees with the FJRW theory.

In the orbifolded case (when $G\neq \Gmax$) it is proved in \cite{FJJS} that the Landau-Ginzburg Mirror Symmetry conjecture holds at the level of Frobenius algebras for a large class of invertible polynomials and groups, but for certain chain-type polynomials this conjecture remains open.  As mentioned above, before we can even hope to prove an orbifolded mirror symmetry result at the level of Frobenius manifolds, we will need a general construction of an orbifolded version of Saito's Frobenius manifold.

Note also that, unlike with the (unorbifolded) Saito Frobenius manifolds, we have no reason to expect that these orbifolded Frobenius manifolds will be semisimple (or even generically semisimple), which means that proving mirror symmetry in genus zero will not automatically give the result in higher genera.

\subsection{Integrable Hierarchies}

The original Landau-Ginzburg mirror symmetry conjecture was the so-called ``Generalized Witten Conjecture'' \cite{Wit:93} that suggested the potential function of $r$-spin theory (FJRW theory for the polynomial $x^r$) should satisfy the KdVr (a.k.a.~Gelfand-Dikii) hierarchy. 
 This was proved by Witten in genus $0$ \cite \cite{Wit:93} (see also \cite{jkv}), by Lee \cite{Lee} for $g=1,2$, and by Faber-Shadrin-Zvonkine \cite{FSZ} for higher genera.  It can be thought of as an alternative form of mirror symmetry for the $A_r$ singularity.

In fact, there are several ways to associate an integrable hierarchy to each of the simple singularities \cite{DriSok,KacWaki,Giv:nkdv,GiMi:05} but these all turn out to be equivalent \cite{HoMi:93, FGM, LWZ}. 
In \cite{FJR,FFJMR} it is shown that the potential function of the FJRW theory for each of the simple singularities satisfies the corresponding (transpose) integrable hierarchy.

Dubrovin and Zhang \cite{DuZh} (see also \cite{BPS,Buryak})
have identified a way to construct an integrable hierarchy associated to any CohFT, and it is natural to conjecture that for any invertible polynomial, the potential function of FJRW theory should satisfy the corresponding (transpose) integrable hierarchy.  

\subsection{$I$- and $J$-functions}

A third way to think about mirror symmetry is in terms of Givental's $I$- and $J$-functions. Specifically one can construct both the $J$-function of FJRW theory and the $I$-function arising from certain period integrals at the Gepner point (see, for example, \cite{HKQ}).  The mirror conjecture in this setting is that there is an explicit change of variables (a ``mirror map'') which identifies the $I$-function with the $J$-function.

J\'er\'emy Gu\'er\'e has proved this form of mirror symmetry holds for all chain-type polynomials with group $\Gmax$ 
\cite[Thm 1.15]{Guere}.

Several others have also identified the FJRW $I$- and $J$-functions for various cases of polynomials and groups as a step on the way to proving various cases of the LG/CY correspondence.   In fact, this is currently the most common approach to verifying specific cases of the LG/CY correspondence.  We will discuss these further in the next section.

\section{Landau-Ginzburg/Calabi-Yau Correspondence}\label{sec:LG-CY}

As mentioned in the introduction, a quasihomogeneous polynomial $W$ with an isolated singularity at
the origin defines a smooth hypersurface $X_W = \{W=0\}$ in weighted
projective space, and if $G$ is an admissible group of automorphisms of $W$, there is an induced
action of the group $\tilde{G} = G/\genj$ on $X_W$.  

The hypersurface $X_W$ is Calabi-Yau precisely when the sum $\sum_{i=1}^N q_i$ of the charges is equal to $1$.  The quotient orbifold  $[X_W/{\tilde{G}}]$ is Calabi-Yau when also $G \subseteq SL(N)$.

The
\emph{Calabi-Yau/Landau-Ginzburg (CY/LG) Correspondence} predicts that when $[X_W/{\tilde{G}}]$ is Calabi-Yau, then the FJRW theory of $W,G$ should agree in all genera with the Gromov-Witten theory of $[X_W/\tilde{G}]$ (after analytic continuation and a symplectic transformation).

This conjecture is interesting for its own sake, but it is also important because FJRW invariants are generally believed to be easier to compute than Gromov-Witten invariants (see \cite{Francis, Guere} for examples).  So the LG/CY correspondence could provide new tools for computing Gromov-Witten invariants.

The first result on the LG/CY correspondence is that of \cite{ChiRu:09}, who proved that the Gromov-Witten state space $H^*_{CR}([X_W/\tilde{G}])$ and the FJRW state space $\sH_{W,G}$ match for all $W$ satisfying the Calabi-Yau condition $\sum q_i = 1$ and for all admissible $G$ (not only those in $SL(N)$).  They further showed that the narrow sectors of the FJRW state space correspond to the ambient cohomology of $H^*_{CR}([X_W/\tilde{G}])$.

The LG/CY correspondence has been verified in genus zero for the Fermat Quintic with $\tilde{G} = \{1\}$ \cite{ChiRu:10}; for the narrow/ambient part of a general hypersurface in Gorenstein weighted projective space with $\tilde{G} = \{1\}$ 
% (which is the G-max invariant part)  def invariance means that they can use fermat in the Gorenstein case.
\cite{CIR:11}; 
for the mirror quintic (with group $\tilde{G} = (\Z/5)^3$) \cite{PrSh:13,LeSh}; and for arbitrary Fermat polynomials with arbitrary admissible groups \cite{PLS:14}. This last paper \cite{PLS:14} is also interesting because it connects the LG/CY correspondence to the better-understood Crepant Transformation Conjecture.
 
The LG/CY correspondence has been verified in all genera for elliptic curves  with $G = \Gmax$ \cite{ShKr, MiRu, MiSh:12}.  

P.~Acosta in \cite{Acosta:14} has recently shown how to generalize the correspondence to Fano and general-type hypersurfaces by replacing analytic continuation with asympototic expansions.  He has proved an LG/Fano and an LG/General-Type correspondence in genus zero for arbitrary nondegenerate polynomials with group $\tilde{G} = \{1\}$.

\section{Gauged Linear Sigma Model}

E.~Clader in \cite{Clader} has generalized FJRW theory from hypersurfaces to certain Calabi-Yau complete intersections with a \emph{hybrid model}, and she has shown that the LG/CY correspondence holds for these complete intersections.    It is natural to ask whether there are further generalizations of the LG/CY correspondence to more general complete intersections, toric varieties, or even to GIT or symplectic  quotients by nonabelian groups.

The answer is ``yes,'' and physically it is given by Witten's \emph{Gauged Linear Sigma Model (GLSM)}, which he developed in the early 1990s \cite{Wit:92a}.  His physical arguments, when translated in to a mathematical setting, should  extend (and allow us to prove) the LG/CY correspondence to complete intersections, toric varieties, and more general quotients by nonabelian groups.

From the point of view of partial differential equations, the gauged linear
sigma model generalizes the Witten Equation \eqref{eq:WittenEq} to the
\emph{Gauged Witten Equation}
\begin{align*}%\label{eq:GaugedWitten}
\bar{\partial}_A u_i+\overline{\frac{\partial W}{\partial
    u_i}}&=0,\\ *F_A&=\mu,
\end{align*}
where $A$ is a connection of certain principal bundle, and $\mu$ is the moment map of a certain Hamiltonian group action.  Both the
Gromov-Witten theory of a Calabi-Yau complete intersection and the
corresponding ``dual'' Landau-Ginzburg theory are generally expressible as gauged linear sigma models. Furthermore, the LG/CY correspondence can be interpreted as a
variation of the moment map $\mu$ in the GLSM. 

In the special case where $W=0$, the Gauged Witten Equation is called the \emph{symplectic vortex equation}, and it has been widely studied, both from an algebraic and a symplectic point of view.  A very important development in this direction is the theory of stable quotients \cite{MOP:11} and stable quasimaps \cite{CFKM:11,CFKi:10, Kim:11,CCFK:14}. 

Recently Fan-Jarvis-Ruan in \cite{FJR:GLSM} have described a mathematical approach to the GLSM.  Their construction is essentially a union of
FJRW-theory (in the Polishchuk-Vaintrob formulation) with stable quasimaps. The relation between the GLSM and FJRW-theory can be viewed as a generalization from a finite Abelian gauge group $G$
(FJRW-theory) to an arbitrary reductive group (GLSM). 

Just as in FJRW theory, the GLSM has both {narrow} and {broad} sectors (see Remark~\ref{def:narrow}) in its state space, and the theory for narrow sectors admits a purely algebraic construction of the virtual cycle, using the cosection localization techniques of \cite{CLL, KiemLi, ChangLi}. 
As special cases, one recovers Clader's hybrid model and Chang-Li's stable maps with $p$-fields \cite{ChangLi}.
In the broad case, cosection localization does not apply, but there is an analytic construction of the virtual cycle \cite{FJR:GLSM2} (see also \cite{TX}).

\bibliography{references}{}
\bibliographystyle{halpha} %%use halpha instead of alpha, because it does a better job with arXiv refs.

\end{document}